\newtheorem{thm}{Theorem}[section]
\newtheorem{lemma}[thm]{Lemma}
\newtheorem{prop}[thm]{Proposition}
\theoremstyle{definition}
\newtheorem{dfn}[thm]{Definition}
\newtheorem{example}[thm]{Example}
\theoremstyle{remark}
\newtheorem{remark}[thm]{Remark}
\newcommand{\thmref}[1]{Theorem~\ref{#1}}
\newcommand{\lemref}[1]{Lemma~\ref{#1}}
\newcommand{\propref}[1]{Proposition~\ref{#1}}
\newcommand{\PP}{{\mathbb P}}
\newcommand{\OO}{{\mathscr O}}
\let\cross=\times
\let\tensor=\otimes
\newcommand{\Hilb}{\operatorname{Hilb}}
\newcommand{\Pic}{\operatorname{Pic}} 
\newcommand{\Ext}{\operatorname{Ext}}
\newcommand{\Hom}{\operatorname{Hom}}
\newcommand{\ch}{\operatorname{ch}}
\newcommand{\rk}{\operatorname{rk}}
\newcommand{\M}{{\mathcal M}}
\newcommand{\EE}{\mathbb{E}}
\newcommand{\Ltensor}{\mathbin{\buildrel{\mathbf L}\over{\tensor}}}
\newcommand{\I}{{\mathscr I}}
\newcommand{\compo}{\raise2pt\hbox{$\scriptscriptstyle\circ$}}
\renewcommand{\leq}{\leqslant}
\renewcommand{\geq}{\geqslant}
\newcommand{\lra}{\longrightarrow}
\newcommand{\R}{\mathbf{R}}
\newcommand{\E}{\mathbb{E}}
\newcommand{\lhom}{\operatorname{{\mathcal H}\mathit{om}}}
\title[]{Rank Two Fourier-Mukai Transforms for K3 Surfaces}
\author[]{Antony Maciocia}
\address{School of Mathematics\\
The University of Edinburgh\\
Peter Guthrie Tait Road\\ Edinburgh, EH9 3FD, UK.}
\email{A.Maciocia@ed.ac.uk}
\thanks{}
\subjclass{Primary: 14F05. Secondary: 14C05, 14F17, 14J28}
\keywords{Sheaf moduli spaces, Simple bundles, K3 surfaces, Hilbert schemes,
Fourier-Mukai transforms}
\begin{document}

\begin{abstract}
We study rank two locally-free Fourier-Mukai transforms on K3 surfaces and show that they come in two
distinct types according to whether the determinant of a suitable twist of the kernel is positive or not.
We show that a necessary and sufficient condition on the existence of
Fourier-Mukai transforms of rank 2 between the derived categories of K3 surfaces $X$ and $Y$
with negative twisted determinant is that $Y$ is isomorphic to $X$ and there must exist a
line bundle with no cohomology.
We use these results to prove that all reflexive K3 surfaces (including the 
degenerate ones) admit Fourier-Mukai transforms.
\end{abstract}
\maketitle

\section*{Introduction}
A Fourier-Mukai transform between two smooth projective varieties $X$ and $Y$
is an equivalence of derived categories $\Phi:D^b(X)\to D^b(Y)$ given by $\Phi(E)=\R
q_*(p^*E\Ltensor P)$, where $P$ is an object of $D^b(X\cross Y)$. The inverse
transform is given by $\hat\Phi(F)=\R p_*(q^*F\Ltensor\R\lhom(P,\OO_{X\cross
Y}))$ up to a shift of complexes. Such transforms were first studied by Mukai
in the case where $X$ is an abelian variety and $Y$ is the dual abelian
variety (see \cite{Muk1}); the Poincar\'e bundle provides the object $P$.
An interesting set of these occur when $X$ is a K3
surface and $Y$ is a two dimensional smooth compact fine moduli space of sheaves on $X$. In these
cases, $P=\E$ is a universal sheaf for this moduli space. In fact, one
of the characteristic features of such a Fourier-Mukai transform is that
$\E$ is {\em bi-universal}: in other words, $X$ is a moduli space of
sheaves on $Y$ with universal sheaf $\E$. See \cite{BBH} and \cite{BBH2} for the first examples
of such transforms and there are further developments in \cite{BBHBook}. The aim of this article
is to complete the description of these Fourier-Mukai transforms whose kernels are rank 2 vector
bundles on a K3 surface. We will show that these
cannot exist when the Picard rank is 1 and are essentially the only additional examples of rank 2
Fourier-Mukai transforms when the Picard rank is bigger than 1. There is a detailed description
of the Picard rank 1 case in \cite{HLOY1} and there is further information in \cite{Yosh99}. We
reproduce these in \propref{p:pic1} with a quick proof in the spirit of this paper.

Such transforms are important because, via base change and their functorality,
they preserve moduli functors. So if a moduli space $\M$ of simple torsion-free
sheaves has transforms given by torsion-free sheaves then the image $\hat\M$
of $\M$ under the transform is also a moduli of simple sheaves and the map
$\M\to\hat\M$ is biholomorphic and a hyperK\"ahler isometry with respect to
the standard $L^2$ metrics on the moduli spaces. In particular, if the rank of
the sheaves in $\hat\M$ is one then we obtain an isometry between $\M$ and a
Hilbert scheme of points on the variety. An example of such results can be found in \cite{BM}.

In this paper we shall be concerned with the case of a rank two
vector bundle $\E$ over a product of $X\cross Y$ where $X$ is a K3 surface. We denote the
restrictions to $X\times\{y\}$ by $E_y$. Then we
can prove the following:
\begin{thm}
If $X$ is a projective K3 surface and $Y$ is a scheme then
there is a Fourier-Mukai transform between $X$ and $Y$ whose kernel is a rank 2 locally-free
sheaf with $c_1(M^{-2}\otimes E_y)^2\leq 0$ for a certain line bundle $M$ (we shall call it
\emph{generically special} with respect to $E_y$ and define this in Definition \ref{d:spec} below)  if and only if
 $Y\cong X$ and, under this isomorphism,
there exist line bundles $A$, $B$, $C$
and $D$ such that $A\tensor \phi^*B\cong C\tensor \phi^*D$ for an isomorphism $\phi:X\to Y$ and
$H^i(A\tensor C^*)=0$ for all $i$, and a non-trivial extension
\begin{equation}
0\lra A\boxtimes B\lra\E\lra(C\boxtimes D)(1\cross\phi)^*\I_\Delta\lra0,\label{e:genE}
\end{equation}
where $\I_\Delta$ is the ideal
sheaf of the diagonal $\Delta$ in $X\cross X$.

In particular, there must exist a line bundle on $X$ with no cohomology.
\end{thm}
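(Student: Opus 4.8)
The final assertion is read off directly from the right-hand side of the equivalence: given line bundles $A$ and $C$ on $X$ with $H^i(A\tensor C^*)=0$ for all $i$, the line bundle $L:=A\tensor C^*$ has no cohomology. It therefore suffices to prove the forward implication --- that a rank $2$ generically special Fourier--Mukai kernel $\E$ forces the structure on the right. It is worth recording why this is a genuine constraint: by Riemann--Roch on the K3 surface one has $\chi(L)=2+\tfrac12 L^2$, so $H^*(L)=0$ forces $L^2=-4$, which already rules out the existence of such a transform whenever the Picard rank is $1$.

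For the forward implication I would begin from the fact that $\Phi$ is an equivalence, so it induces a Hodge isometry of the Mukai lattices of $X$ and $Y$. Feeding in the generically special hypothesis $c_1(M^{-2}\tensor E_y)^2\leq0$ constrains the Mukai vector of $\E$ together with its fibrewise invariants and should force $Y\cong X$. The same hypothesis should produce a destabilizing sub-line-bundle of each $E_y$, globalising to an inclusion $A\boxtimes B\hookrightarrow\E$; its quotient is a rank $1$ torsion-free sheaf whose Chern classes identify it, after twisting by $C\boxtimes D$, with the ideal of the (possibly $\phi$-twisted) diagonal, which is the extension \eqref{e:genE}. The accompanying relation $A\tensor\phi^*B\cong C\tensor\phi^*D$ is precisely the condition that $(A\tensor C^*)\boxtimes(B\tensor D^*)$ restrict trivially to the $\phi$-twisted diagonal, a fact used below.

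The vanishing $H^*(A\tensor C^*)=0$ emerges from the equivalence condition itself. Applying $\Hom(-,A\boxtimes B)$ to $0\to\I_\Delta\to\OO\to\OO_\Delta\to0$ (suitably twisted), the ``off-diagonal'' term is $\Ext^*_{X\cross X}(C\boxtimes D,A\boxtimes B)\cong H^*(A\tensor C^*)\tensor H^*(B\tensor D^*)$ by K\"unneth, while the diagonal term reduces, using $\det N_\Delta\cong\OO_\Delta$ and the triviality of the restriction above, to $H^*(\OO_\Delta)\cong H^*(\OO_X)$ (up to the codimension-$2$ shift). For $\E$ to define an equivalence the off-diagonal term must vanish; since Serre duality together with $A\tensor C^*\cong\phi^*(D\tensor B^*)$ makes $H^*(A\tensor C^*)$ and $H^*(B\tensor D^*)$ vanish simultaneously, this is exactly the condition $H^*(A\tensor C^*)=0$. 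The same vanishing collapses the long exact sequence to give $\Ext^1\cong\C$, so a non-trivial $\E$ exists and is unique up to scalar. I expect the main obstacle to be this structural step --- globalising the fibrewise destabilization and identifying the quotient as a twisted diagonal ideal --- rather than the cohomological extraction at the end.
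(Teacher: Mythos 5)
Your observations about the ``in particular'' clause and the Picard rank one case are correct, but the proposal has genuine gaps in both directions of the equivalence. The first is that you discard the backward implication altogether: ``it therefore suffices to prove the forward implication'' is false for an ``if and only if'' statement, and the claim that every non-trivial extension of the stated form with $A\tensor\phi^*B\cong C\tensor\phi^*D$ and $H^*(A\tensor C^*)=0$ really is a Fourier--Mukai kernel is a substantial half of the theorem. In the paper this is \thmref{t:suffic}: after normalizing $A=\OO=B$ one shows that the fibrewise extensions $0\to\OO\to E_x\to C\I_x\to0$ form a complete strongly simple family of vector bundles (local freeness via Cayley--Bacharach, strong simplicity via \lemref{l:strong}), which is what Bridgeland's criterion \cite{Br} requires, and one uses the K\"unneth identification $\Ext^1((C\boxtimes D)\I_\Delta,\OO)\cong H^0(C^*D^*)$ to produce an extension whose restriction to every fibre is non-trivial --- this is exactly where $AB=CD$ enters. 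Note that the K\"unneth computation you sketch is essentially this one, but it belongs to the sufficiency direction; it is not a necessity argument.

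In the forward direction the two decisive steps are precisely the ones you leave open. A Hodge isometry of Mukai lattices cannot by itself ``force $Y\cong X$'' --- Fourier--Mukai partners of a K3 surface need not be isomorphic --- and the paper never argues that way. Instead (\propref{p:main}) the isomorphism is constructed geometrically: the generically special $M$ gives, for $y$ in a dense open set, exact sequences $0\to M\to E_y\to LM^*\I_{Z(y)}\to0$ with $L=\det E_y$; the condition $\chi(E_y,E_y)=0$ (forced by the moduli being two-dimensional) yields $c_1(LM^{-2})^2=4|Z(y)|-8$, so the hypothesis $c_1(M^{-2}\tensor E_y)^2\leq0$ bounds $|Z(y)|$ by $1$, the stratum with $|Z(y)|=0$ is excluded, and then $y\mapsto Z(y)$ is the isomorphism $\phi$. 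This is the only place the numerical hypothesis is used, and it is exactly the ``structural step'' you flag as the main obstacle and do not carry out. Finally, your route to $H^*(A\tensor C^*)=0$ rests on the unproved assertion that an equivalence forces the off-diagonal K\"unneth term $\Ext^*(C\boxtimes D,A\boxtimes B)$ to vanish; the equivalence criterion constrains $\Ext^*(E_{y_1},E_{y_2})$ between fibres of $\E$, not Ext groups on the product between the sub and the quotient of the extension. The paper obtains the vanishing fibrewise: local freeness of every $E_x$ plus Cayley--Bacharach give $H^0(C)=0$; the equality $\dim\Ext^1(C\I_x,\OO)=1$, forced by the moduli being two-dimensional and complete, gives $h^1(C)=0$; and $\chi(C)=0$ then kills $H^2(C)$.
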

On the other hand if the Picard rank is one then such transforms cannot exist.

We proceed by first assuming that $Y\cong X$ and that $\E$ takes the form
(\ref{e:genE}) for some line bundles $A$, $B$, $C$ and $D$, and then show
that $\E$ gives rise to a Fourier-Mukai transform if and only if $A\tensor
B=C\tensor D$ and $H^i(A\tensor C^*)=0$. This is done in \thmref{t:suffic}. We
then go on to remove this ansatz. In the case when $c_1(M^{-2}\otimes E_y)^2\geq0$ we show that the
moduli space $Y$ is stratified by rational curves corresponding to the maximal slope of sub-line
bundles of $\E_y$.

We shall also see that the known (rank 2) examples of Fourier-Mukai transforms
satisfy the conditions of the theorem and so we obtain new proofs that such
transforms exist.
These occur for so called {\em reflexive} K3 surfaces introduced in \cite{BBH} which we define in \S4
(the constraint on the surface is simply the existence of certain divisors). The original
examples required also that a certain non-degeneracy condition be satisfied. Bruzzo, Bartocci
and Hern\'andez-Ruip\'erez describe the moduli of such non-degenerate K3 surfaces in \cite{BBH3}.
We shall use our theorem
to extend the result in \cite{BBH} to the degenerate case as well. It is also
easy to extend the results of \cite{BM} giving explicit biholomorphic (and
indeed, hyper-K\"ahler)
isomorphisms between certain moduli spaces of simple sheaves and Hilbert
schemes of zero dimensional subschemes. We extend these results
to the more general setting where our K3 surface is only assumed to admit
some line bundle $L$ with no cohomology.

Throughout the paper we use $AB$ as a shorthand for the tensor $A\otimes B$ of sheaves.
\section*{Acknowledgements}
This article is dedicated to my friend and colleague Ugo Bruzzo on his 60th birthday. I am also
grateful to Kota Yoshioka for pointing out an error in an earlier version of this paper.

\section{Fourier-Mukai transforms for K3 surfaces}
Let $X$ be a (minimal) K3 surface. In this section, we shall study Fourier-Mukai transforms
$\Phi_\EE:D^b(X)\to D^b(Y)$ with
$Y=X$ and $\EE$ given by an extension of the form
\[0\lra A\boxtimes B\to\EE\lra(C\boxtimes D)\I_\Delta\lra 0,\]
where $A$, $B$, $C$ and $D$ are lines bundles on $X$ and $\I_\Delta$ is the ideal sheaf of the diagonal
$\Delta\subset X\times X$.
We let $c_1(A)=a$,
$c_1(B)=b$, $c_1(C)=c$ and $c_1(D)=d$. Let $E_x$ denote the restriction of $E$
to $X\cross\{x\}$ and let ${}_xE$ denote the restriction to $\{x\}\cross Y$.
We shall assume that $\E$ is locally-free.

Under this assumption on $\E$ we can write down a general formula for the Chern
character of a transform.
\begin{prop}\label{p:necconds}
If $F$ is a sheaf on $X$ with Chern character $(r,f,t)$ then the Chern
character of $\Phi(F)$ is given by
\begin{align*}
\ch_0&=\textstyle\frac12r(a^2+c^2+6)+f\cdot(a+c)+2t\\
\ch_1&=\textstyle\frac12r\bigl[(a^2+4)b+(c^2+2)d-2c\bigr]+(f\cdot a)b+
(f\cdot c)d-f+t(b+d)\\
\ch_2&=\textstyle\frac14r(a^2b^2+4b^2+c^2d^2-2c^2+2d^2-4c\cdot d)+{}\\
&\qquad{}+\textstyle\frac12 f\cdot((d^2-2)c+b^2a-2d)+
\textstyle\frac12t(b^2+d^2-2)
\end{align*}
\end{prop}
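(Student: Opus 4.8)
The plan is to apply Grothendieck--Riemann--Roch to the second projection $q\colon X\cross X\to Y=X$, along which $\Phi(F)=\R q_*(p^*F\Ltensor\E)$ is computed, with $p$ the first projection. Since $\E$ is locally free the derived tensor is an ordinary tensor, and since $q$ is a smooth projection its relative tangent sheaf is $p^*T_X$; on a K3 surface $\operatorname{td}(T_X)=1+2[\mathrm{pt}]$ because $c_1(T_X)=0$ and $c_2(T_X)=24$. Hence
\[
\ch(\Phi(F))=q_*\bigl(p^*(\ch(F)\operatorname{td}(T_X))\cdot\ch(\E)\bigr),
\]
so everything reduces to computing $\ch(\E)$ and then carrying out the fibre integral $q_*$.

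First I would read $\ch(\E)$ off the defining extension as $\ch(\E)=e^{p^*a+q^*b}+e^{p^*c+q^*d}\bigl(1-\ch(\OO_\Delta)\bigr)$, the two rank-one terms being immediate and $\ch(\I_\Delta)=1-\ch(\OO_\Delta)$. The only non-formal ingredient is $\ch(\OO_\Delta)$. Writing $\delta\colon X\to X\cross X$ for the diagonal, Grothendieck--Riemann--Roch for the closed immersion $\delta$ together with $\delta^*\operatorname{td}(T_{X\cross X})=\operatorname{td}(T_X)^2$ gives $\ch(\OO_\Delta)=\delta_*\bigl(\operatorname{td}(T_X)^{-1}\bigr)=\delta_*(1-2[\mathrm{pt}])=[\Delta]-2[\mathrm{pt}\cross\mathrm{pt}]$, the higher Todd terms dropping out because $X$ is a surface.

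Next I would push forward term by term. For the $A\boxtimes B$ piece and for the unit part of the $C\boxtimes D$ piece the projection formula applies directly: integrating a class pulled back along $p$ against $e^{q^*b}$ (resp.\ $e^{q^*d}$) multiplies $e^b$ (resp.\ $e^d$) by the number $\int_X\ch(F)\operatorname{td}(T_X)e^a=\chi(F\tensor A)$ (resp.\ $\chi(F\tensor C)$), since $q_*p^*\xi=\int_X\xi$. For the diagonal piece I would instead use $q\compo\delta=\Id_X$ and the projection formula in the form $q_*\bigl(\alpha\cdot\delta_*1\bigr)=(q\compo\delta)_*\delta^*\alpha$; because $\delta^*p^*=\delta^*q^*=\Id$ this collapses $q_*\bigl(p^*(\ch F\,\operatorname{td})\,e^{p^*c+q^*d}[\Delta]\bigr)$ to the honest class $\ch(F)\operatorname{td}(T_X)e^{c+d}$ on $X$, while the $-2[\mathrm{pt}\cross\mathrm{pt}]$ correction contributes $+2r[\mathrm{pt}]$ to $\ch_2$.

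Finally I would collect the graded pieces of
\[
\ch(\Phi(F))=\chi(F\tensor A)\,e^b+\chi(F\tensor C)\,e^d-\ch(F)\operatorname{td}(T_X)e^{c+d}+2r[\mathrm{pt}],
\]
expanding each factor in degrees $0,2,4$ and reading $a^2,b^2,c^2,d^2$ as self-intersection numbers and a product such as $a^2b^2$ as $(\int_X a^2)(\int_X b^2)$. The degree-$0,2,4$ parts then reproduce the stated $\ch_0,\ch_1,\ch_2$; in particular the $+2r$ from the point correction cancels the $-2r$ sitting inside $\ch(F)\operatorname{td}(T_X)$ in degree $4$. I expect the only genuine obstacle to be the diagonal term: one must compute $\ch(\OO_\Delta)$ correctly and then recognise that, unlike the other two summands, multiplying its $[\Delta]$-part by the exponential and pushing forward does not produce a number but returns a full class on $X$ via $q\compo\delta=\Id_X$. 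The rest is routine bookkeeping of Chern characters on a surface.
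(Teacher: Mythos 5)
Your proof is correct, and it arrives at the right answer by a genuinely different route from the paper's. The paper's proof is object-level: it applies the functor $\R q_*(p^*F\Ltensor-)$ to the defining extension of $\E$ and to the structure sequence of the diagonal, and simply identifies the three resulting objects of $D^b(X)$, namely $\R q_*(p^*(FA)\tensor q^*B)\simeq\R\Gamma(FA)\tensor B$, the analogous object for the $C\boxtimes D$ piece, and $\R q_*(p^*F\tensor(C\boxtimes D)\tensor\OO_\Delta)\simeq FCD$ because $q\compo\delta=\Id$. Taking Chern characters then yields $\ch(\Phi F)=\chi(FA)\ch(B)+\chi(FC)\ch(D)-\ch(FCD)$ with no Grothendieck--Riemann--Roch input at all; the only Riemann--Roch used is the Hirzebruch form on $X$ to expand $\chi(FA)$ and $\chi(FC)$ in terms of $(r,f,t)$. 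You instead work entirely in cohomology: GRR along $q$ converts $\ch\compo\R q_*$ into $q_*\bigl({-}\cdot p^*\operatorname{td}(T_X)\bigr)$, and GRR for the immersion $\delta$ gives $\ch(\OO_\Delta)=[\Delta]-2[\mathrm{pt}\cross\mathrm{pt}]$, after which your class-level collapse $q_*(\alpha\cdot[\Delta])=\delta^*\alpha$ plays exactly the role of the paper's identification of the diagonal term as $FCD$. Indeed your final expression agrees with the paper's, since $-\ch(F)\operatorname{td}(T_X)e^{c+d}+2r[\mathrm{pt}]=-\ch(FCD)$, which is the cancellation you flag. What the paper's route buys is economy: no $\ch(\OO_\Delta)$, no Todd factors, no correction terms to track, so the whole proof is two lines plus bookkeeping. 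What your route buys is mechanical generality: it computes the cohomological action of $\Phi$ for any kernel whose Chern character on $X\cross X$ is known, without needing to recognize the pushforwards as familiar sheaves, and it would survive unchanged if the kernel were replaced by one not assembled from such transparent pieces.
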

\begin{proof}
This follows by an easy computation from the formula
$$\ch(\Phi F)=\chi(FA)\ch(B)+\chi(FC)\ch(D)-\ch(FCD)$$
which is obtained by applying $\R q_*\compo p^*F\tensor{-}$ to Sequence
(\ref{e:genE}). 
\end{proof}
If $\Phi$ is to give rise to a Fourier Inversion Theorem then we require
that the restrictions to the factors parametrize complete two dimensional
simple moduli spaces. This immediately imposes a necessary condition on the
line bundles $A$, $B$, $C$ and $D$. Then $\chi(E_y,E_y)=(a-c)^2+4=2\chi(AC^*)$. But the
dimension 
of the moduli of simple sheaves is $2-\chi(E,E)$ and so we must have
$\chi(AC^*)=0$ and similarly $\chi(BD^*)=0$. If we further suppose that
$\det\Phi(\OO)=\OO$ which we can arrange by twisting $\E$ by a line bundle
pulled back from the second factor then we obtain a constraint on $A$, $B$, $C$ and $D$
which can be expressed by
\begin{equation}\label{e:constd}
(\chi(C)-1)d=c-\chi(A)b.\end{equation}

The following can be easily proved from (\ref{e:genE}) and is our first
indication of the role of line bundles with no cohomology.
\begin{prop}\label{p:AOBO}
Let $\E$ be given as in (\ref{e:genE}) and suppose that $\Phi\OO=\R
q_*\mathbb{E}=\OO$. Then $A=\OO=B$, $C=D^*$ and $h^i(C)=0$ for $i=0,1,2$.
\end{prop}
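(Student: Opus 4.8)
The plan is to apply $\R q_*$ to the defining extension (\ref{e:genE}) and to read the three assertions off the induced long exact sequence of direct images, using only that $\OO$ is a rank-one torsion-free sheaf placed in degree zero. The left-hand term is immediate: writing $A\boxtimes B=p^*A\otimes q^*B$ and using the projection formula, $\R q_*(A\boxtimes B)=\R\Gamma(X,A)\otimes B$, whose $i$-th cohomology sheaf is the free $B$-module $H^i(A)\otimes B$. For the right-hand term I would resolve the diagonal: tensoring $0\to\I_\Delta\to\OO_{X\times X}\to\OO_\Delta\to0$ by $C\boxtimes D$ and applying $\R q_*$, the identity $q\circ\Delta=\mathrm{id}$ gives $\R q_*\Delta_*(CD)=CD$, so $\R q_*((C\boxtimes D)\I_\Delta)$ sits in a triangle with $\R\Gamma(X,C)\otimes D$ and $CD$, the map between them being evaluation of the sections of $C$. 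Splicing these against $\R q_*\E=\OO$ produces the cohomology-sheaf sequence I will mine.

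Two conclusions come out cheaply. In top degree the sequence forces $H^2(C)\otimes D=0$, hence $H^2(C)=0$, equivalently $H^0(C^*)=0$ by Serre duality; and left-exactness of $q_*$ embeds $H^0(A)\otimes B=q_*(A\boxtimes B)$ into the line bundle $q_*\E=\OO$, so a rank count gives $h^0(A)\le1$. The real work is to show $h^0(C)=0$. Here I would pass to fibres: since $\R q_*\E=\OO$ has no higher cohomology and $q_*\E$ is a line bundle, cohomology-and-base-change gives $H^1(E_x)=H^2(E_x)=0$ and $h^0(E_x)=1$ for every $x$, and feeding this into $0\to A\to E_x\to C\I_x\to0$ produces an isomorphism $H^1(C\I_x)\cong H^2(A)$ for all $x$. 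Constancy of $h^1(C\I_x)$ in $x$ then forbids base points of $C$, leaving only the alternatives $h^0(C)=0$ or $C$ globally generated.

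Granting $h^0(C)=0$, the endgame is clean. From $h^0(E_x)=1$ the fibre sequence gives $h^0(A)-h^1(A)=1$, so $A$ is effective, while $H^2(A)\cong H^1(C\I_x)\ne0$ makes $A^*$ effective; on a K3 surface a line bundle with both $A$ and $A^*$ effective must be trivial, so $A=\OO$. Then $H^2(A)=\C$ forces $h^1(C)=0$, so $C$ has no cohomology at all; the degree-one term of the sequence now reads $CD\cong H^2(A)\otimes B\cong B$ while the degree-zero term collapses to $B\cong\OO$, whence $C\cong D^*$. The main obstacle is precisely the dichotomy closing the previous paragraph: the globally generated alternative (for instance $C=D$ an elliptic pencil) satisfies every cohomological and numerical constraint available so far, including $\chi(AC^*)=0$ and $\ch_0\Phi\OO=1$ from Proposition~\ref{p:necconds}, so discarding it cannot be formal. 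It requires genuinely exploiting that $\Phi$ is a transform — that the $\{E_x\}$ form a complete family of simple sheaves — rather than the determinant constraint (\ref{e:constd}) alone, and this is exactly the step at which the promised line bundle with no cohomology, namely $C$, is forced to exist.
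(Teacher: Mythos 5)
Everything you actually prove is correct, and proved efficiently: the pushforward of (\ref{e:genE}) via the diagonal resolution, the vanishing $H^2(C)=0$, the base-change argument giving $h^0(E_x)=1$ and $h^1(E_x)=h^2(E_x)=0$ for all $x$, the resulting isomorphism $H^1(C\I_x)\cong H^2(A)$, the dichotomy ``$h^0(C)=0$ or $C$ globally generated'', and the endgame in the branch $h^0(C)=0$ (where your chain $A=\OO$, then $h^1(C)=0$, then $B\cong\OO$ and $CD\cong\OO$ is sound). The paper offers no proof to compare with --- the result is asserted to be ``easily proved'' --- so your attempt must stand on its own, and it does not: as you concede, the globally generated branch is never eliminated, and that branch is exactly where all the content lies.

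The deeper problem is that this gap cannot be closed, not by strong simplicity nor by any other use of the Fourier-Mukai hypothesis, because the configuration you single out really occurs for genuine Fourier-Mukai kernels. Let $X$ be a projective K3 surface with $\Pic(X)\cong U\oplus\langle-4\rangle$. Such an $X$ carries an elliptic pencil $|E|$ whose fibres are all irreducible (no $(-2)$-class is orthogonal to $E$) and a class $\alpha$ with $\alpha\cdot E=0$, $\alpha^2=-4$. Since $E$ is nef and every irreducible curve orthogonal to $E$ is a fibre, none of $\pm\alpha$, $\pm(\alpha-E)$ is effective, so by Riemann-Roch both $A=\OO(\alpha)$ and $L=\OO(E-\alpha)$ have no cohomology at all. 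By Theorem \ref{t:Lnocohom}, $L$ gives a Fourier-Mukai kernel $\E'$ with $0\lra\OO\boxtimes\OO\lra\E'\lra(L\boxtimes L^*)\I_\Delta\lra0$. Now set $B=\OO(2E-\alpha)$ and $\E=\E'\otimes(A\boxtimes B)$. Then $\E$ is again a Fourier-Mukai kernel and sits in an extension (\ref{e:genE}) with left-hand term $A\boxtimes B$ and with $C=LA=\OO(E)$, $D=L^*B=\OO(E)$; note that conditions (i) and (ii) of Theorem \ref{t:suffic} hold. Moreover $\R q_*(A\boxtimes B)=\R\Gamma(A)\otimes B=0$, while the evaluation map $H^0(\OO(E))\otimes\OO(E)\to\OO(2E)$ is surjective with kernel $\OO(-E)\otimes\OO(E)=\OO$, so $\R q_*\bigl((C\boxtimes D)\I_\Delta\bigr)=\OO$ and hence $\R q_*\E=\OO$. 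Thus every hypothesis of the Proposition is satisfied, yet $A\neq\OO\neq B$, $C=D$ is not $D^*$, and $h^0(C)=2$. So the elliptic-pencil alternative is not a case awaiting a cleverer argument but a counterexample to the statement as written; what survives is only the weaker conclusion of Theorem \ref{t:main}, namely that $A^*C=D^*B=L$ for some $L$ with $H^*(L)=0$. Any completion of your proof must therefore fail, and a correct version of the Proposition needs an extra hypothesis (for instance $CD\cong\OO$, or an exclusion of pencil twists) rather than the missing lemma you hoped for.
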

We shall see below that the conditions on $A$, $B$, $C$ and $D$ in this
proposition are also sufficient for the existence of a Fourier-Mukai transform
given by $\E$.

\section{Strongly Simple Families}
Following a celebrated result of Bondal and Orlov \cite{BO}, we make the following definition:
\begin{dfn}\label{d:strsmp}
Let $\E\to X\cross S$ be a family of sheaves over any projective variety $X$ parametrized by a
smooth variety $S$. We say that $\E$ or $\{\E_s\}_{s\in S}$ is {\em strongly
simple} if, for all distinct geometric points $s,t\in S$ and $i$,
$\dim\Ext^i(\E_s,\E_t)=0$ and $E_s$ is simple. Such a family is {\it complete}
if it is a component of the moduli scheme of simple sheaves (when such a scheme
exists). 
\end{dfn}
Bondal and Orlov's result can then be rephrased as ``strongly simple families give rise to fully
faithful functors''.
\begin{example}
For a K3 surface $X$ and any line bundle $L$ on $X$, the family $\{L\I_x\}_{x\in X}$ is
strongly simple and complete. 
\end{example}
\begin{remark}
For a K3 surface observe that $\chi(E,E)=0$ means that the condition
$$\dim\Hom(E_s,E_t)=\delta_{st}$$
is equivalent to strong simplicity for a 2 dimensional moduli of simple
sheaves. Mukai (\cite{Muk3}) has already shown that the simple moduli space exists and is
smooth for K3 surfaces.
\end{remark}
We can use the example above to generate strongly simple families of
torsion-free sheaves.
\begin{lemma}\label{l:strong}
Suppose that $X$ is a K3 surface.
Let $A$ be a simple torsion-free sheaf and $C\to X\cross S$ a strongly simple
family of torsion-free sheaves flat over an algebraic variety $S$ such that,
for all geometric points $s\in S$, $\Hom(A,C_s)=0$ and $\Hom(C_s,A)=0$.
Then the set of non-trivial extensions  
$$0\lra A\lra E\lra C_s\lra0$$
modulo isomorphisms is a strongly simple family over $X\cross S$.
\end{lemma}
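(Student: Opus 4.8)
The plan is to use that $X$ is a K3 surface, so that $K_X\cong\OO_X$ and Serre duality gives $\Ext^i(E_s,E_t)\cong\Ext^{2-i}(E_t,E_s)^*$ for all $i$; since the $E_s$ are coherent on a surface, these groups vanish outside $i=0,1,2$. Consequently, to establish strong simplicity it suffices to prove $\Hom(E_s,E_t)=0$ for $s\neq t$ (which by duality simultaneously kills $\Ext^2(E_s,E_t)=\Hom(E_t,E_s)^*$) together with $\Hom(E_s,E_s)=\C$, and then to close the middle group numerically. This is exactly the reduction recorded in the remark above: once $\chi(E_s,E_t)=0$, strong simplicity is equivalent to $\dim\Hom(E_s,E_t)=\delta_{st}$.

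The heart of the argument is the Hom computation, obtained by feeding the two defining sequences into one another. First, Serre duality turns the hypotheses $\Hom(A,C_s)=0$ and $\Hom(C_s,A)=0$ into $\Ext^2(C_s,A)=0$ and $\Ext^2(A,C_s)=0$, so $\R\Hom(A,C_t)$ and $\R\Hom(C_s,A)$ sit in degree $1$. Applying $\Hom(-,E_t)$ to $0\lra A\lra E_s\lra C_s\lra0$ gives
\[0\lra\Hom(C_s,E_t)\lra\Hom(E_s,E_t)\lra\Hom(A,E_t)\xrightarrow{\ \psi\ }\Ext^1(C_s,E_t),\]
and I would compute the outer terms by applying $\Hom(C_s,-)$ and $\Hom(A,-)$ to $0\lra A\lra E_t\lra C_t\lra0$. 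Using $\Hom(C_s,A)=0$ and strong simplicity of $C$ (so $\Hom(C_s,C_t)=0$ for $s\neq t$) yields $\Hom(C_s,E_t)=0$; using $\Hom(A,A)=\C$ and $\Hom(A,C_t)=0$ yields $\Hom(A,E_t)=\C$, generated by the inclusion $A\hookrightarrow E_t$. Since $\R\Hom(C_s,C_t)=0$ the same sequence identifies $\Ext^1(C_s,E_t)\cong\Ext^1(C_s,A)$, and under this identification $\psi$ carries the generator to the extension class $[e_s]$ of $E_s$. Non-triviality means $[e_s]\neq0$, so $\psi$ is injective and $\Hom(E_s,E_t)=0$. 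For $s=t$ the same machinery applies: the connecting map $\Hom(C_s,C_s)\to\Ext^1(C_s,A)$ sends $\Id_{C_s}$ to $[e_s]\neq0$ and so is injective, giving $\Hom(C_s,E_s)=0$; hence the restriction $\Hom(E_s,E_s)\to\Hom(A,E_s)=\C$ is injective, and it is onto because $\Id_{E_s}$ restricts to the generator, forcing $\Hom(E_s,E_s)=\C$. Thus each $E_s$ is simple, $\Hom(E_s,E_t)=0$ for $s\neq t$, and (the hypotheses and strong simplicity being symmetric in $s,t$) $\Ext^2(E_s,E_t)=0$ as well.

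To finish I would close the last group via the Euler characteristic. Additivity on the two filtrations gives $\chi(E_s,E_t)=\chi(A,A)+\chi(A,C_t)+\chi(C_s,A)+\chi(C_s,C_t)$, which by the vanishings above and $\chi(C_s,C_t)=0$ equals $\chi(A,A)-\dim\Ext^1(A,C_t)-\dim\Ext^1(C_s,A)$. Since $\Hom(E_s,E_t)=\Ext^2(E_s,E_t)=0$ we have $\dim\Ext^1(E_s,E_t)=-\chi(E_s,E_t)$, so the remaining vanishing is equivalent to $\chi(E_s,E_t)=0$. This is precisely where the rigidity of $A$, i.e. $\Ext^1(A,A)=0$ so that $\chi(A,A)=2$, together with $\dim\Ext^1(C_s,A)=1$, enters: it yields $\chi(E_s,E_t)=2-1-1=0$ and hence $\Ext^1(E_s,E_t)=0$. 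The condition $\dim\Ext^1(C_s,A)=1$ is also what makes the non-trivial extension unique up to isomorphism, so that the construction returns a single sheaf $E_s$ for each $s$.

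For the family structure I would realise $\E$ on $X\cross S$ as a universal extension: the relative $\lext^1$-sheaf over $S$ of the family $C$ against (the pullback of) $A$ is, under the base-change conditions guaranteed by $\Hom(C_s,A)=0$, locally free of rank $\dim\Ext^1(C_s,A)=1$, and its universal extension is a flat family $\E\to X\cross S$ with $\E_s=E_s$; each $E_s$ is torsion-free as an extension of torsion-free sheaves. The main obstacle is the repeated sub-sheaf $A$: the groups $\Ext^\bullet(A,A)$ do not vanish, and it is only the combination of non-triviality (which makes the relevant connecting maps injective and thereby removes the diagonal contribution at the level of $\Hom$) with the numerical identity $\chi(E_s,E_t)=0$ that controls them. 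The delicate step is confirming that the map $\psi$ really is Yoneda composition with $[e_s]$, so that its injectivity is equivalent to non-triviality of the extension.
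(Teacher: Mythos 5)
Your central computation is correct and is, in substance, the paper's own proof. The paper argues by a diagram chase on a putative nonzero map $\alpha:E\to F$ between two extensions, using exactly the inputs you use: $\Hom(A,C_t)=0$ and $\Hom(C_s,A)=0$, simplicity of $A$ and of $C_s$, strong simplicity of the family $C$, and non-triviality of the extensions to rule out splittings. Your long-exact-sequence packaging, with the connecting map $\psi$ identified as Yoneda composition with the class $[e_s]$ (so that injectivity of $\psi$ is precisely non-triviality), is the cohomological form of the same chase; the identification you flag as delicate is standard homological algebra, so that step is fine. Up to this point the two arguments differ only in presentation, and both deliver $\dim\Hom(E_s,E_t)=\delta_{st}$, with $\Ext^2$ then killed by Serre duality.

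The gap is in your final Euler-characteristic step. To kill $\Ext^1(E_s,E_t)$ you invoke ``the rigidity of $A$'', i.e.\ $\Ext^1(A,A)=0$, together with $\dim\Ext^1(C_s,A)=1$; neither is a hypothesis of the lemma, which assumes only that $A$ is simple and torsion-free. These assumptions cannot be dispensed with: take $A=\I_x$ (simple but not rigid) and $C_y=L\I_y$ for a line bundle $L$ with $H^0(L)=0=H^0(L^*)$ and $\ell^2\leq-4$ (for instance the class $\ell$ on a reflexive K3, where $h\cdot\ell=0$ forces both $\pm\ell$ to be non-effective). All hypotheses of the lemma hold, non-trivial extensions exist since $\chi(L\I_y,\I_x)=\ell^2/2<0$, yet $\chi(E_s,E_t)=\ell^2<0$, so $\Ext^1(E_s,E_t)\neq0$ for $s\neq t$ and the family is not strongly simple in the literal sense of Definition~\ref{d:strsmp}. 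So your last paragraph does not follow from the stated hypotheses; it proves the conclusion only under extra numerical assumptions that you treat as given (the same applies to your use of rank one of the relative $\lext^1$ for the family structure). For comparison, the paper's own proof never addresses $\Ext^1$ at all: it stops at the $\Hom$-level statement, and the upgrade to full strong simplicity is supplied by the Remark following Definition~\ref{d:strsmp} (Serre duality plus $\chi(E,E)=0$), a numerical condition that does hold in every application the paper makes, where $A=\OO$ and $C_s=C\I_x$ with $H^*(C)=0$. Your instinct that a numerical input is required is therefore sound, but it must be stated as an additional hypothesis (or the conclusion weakened to the $\Hom$-level statement), not assumed silently.
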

\begin{proof}
Suppose that $E$ and $F$ are two such extensions and let $\alpha:E\to F$ be a
non-zero map:
\[\xymatrix{0\ar[r]&A\ar[r]& E\ar[r]\ar[d]_{\alpha}&C_s\ar[r]&0\\
0\ar[r]&A\ar[r]&F\ar[r]&C_r\ar[r]&0}
\]
Observe that, if $A\to E\to F$ is zero then there is an induced non-zero map
$C_s\to F$. But the image of this in $C_t$ is non-zero from the hypothesis
$\Hom(C_s,A)=0$. But by the strong-simplicity this means that $s=t$ and
$C_s\to F$ splits the $F$ sequence; a contradiction.
So we may assume that $A\to F$ is non-zero. From the hypothesis
$\Hom(A,C_s)=0$,
the composite $A\to E\to F\to C_t$ is zero and so there is a
non-zero lift $A\to A$. But $A$ is simple and so this must be a multiple of
the identity. Then a simple diagram chase gives a non-zero map $C_s\to C_t$.
But this implies that $s=t$ otherwise we must have $\alpha=0$. On the other
hand, if $\alpha\neq0$ then, since $C_s$ is simple, we must have
that $\alpha$ is an isomorphism as required.
\end{proof}
We now establish our main theorem under the assumption that $X\cong Y$ and
that $\E$ is given by an extension (\ref{e:genE}).
\begin{thm}\label{t:suffic}
Let $X$ be a projective K3 surface.
Suppose that $A$, $B$, $C$ and $D$ are line bundles over $X$. Then a
locally-free extension
$$0\lra A\boxtimes B\lra\E\lra (C\boxtimes D)\I_\Delta\lra0$$
gives rise to a Fourier-Mukai transform if and only if the two conditions
\begin{equation}\label{e:equivs}
\mbox{(i) }h^j(AC^*)=0\mbox{ for all $j$}\mbox{, and  (ii) }AB=CD\end{equation}
are satisfied.
\end{thm}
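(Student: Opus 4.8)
The plan is to reduce the statement to a question about strong simplicity and then feed it through \lemref{l:strong}. Since $X$ is a K3 surface its canonical bundle $\omega_X\cong\OO_X$ is trivial, so a fully faithful Fourier--Mukai functor $D^b(X)\to D^b(X)$ is automatically an equivalence by the criterion of Bondal--Orlov \cite{BO}; the content is therefore only whether $\Phi_\E$ is fully faithful. Again by \cite{BO}, this holds precisely when the family $\{{}_xE\}_{x\in X}$ of point-images $\Phi_\E(\OO_x)={}_xE$ is strongly simple. Restricting the defining sequence to $X\times\{y\}$ and to $\{x\}\times X$ (and using that $\I_\Delta$ is symmetric under the swap of factors) gives extensions
\[0\lra A\lra E_y\lra C\I_y\lra0,\qquad 0\lra B\lra{}_xE\lra D\I_x\lra0,\]
so everything is governed by the two rank-one families $\{C\I_y\}$ and $\{D\I_x\}$, which are strongly simple and complete by the example above.

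For the \emph{if} direction, assume (i) and (ii). Condition (i) gives $h^0(AC^*)=0$ and, via Serre duality together with $\chi(AC^*)=0$, also $h^0(CA^*)=h^2(AC^*)=0$; hence $\Hom(C\I_y,A)=H^0(AC^*)=0$ and $\Hom(A,C\I_y)\subseteq H^0(CA^*)=0$. Condition (ii) gives $BD^*\cong(AC^*)^*$, so $h^j(BD^*)=h^{2-j}(AC^*)=0$ and likewise $\Hom(D\I_x,B)=\Hom(B,D\I_x)=0$. Because $\E$ is locally free each restricted extension is non-split (otherwise $E_y\cong A\oplus C\I_y$ would fail to be locally free at $y$). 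Then \lemref{l:strong}, applied with the simple line bundle $B$ and the strongly simple family $\{D\I_x\}$, shows that $\{{}_xE\}$ is strongly simple, and by the first paragraph $\Phi_\E$ is an equivalence.

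For the \emph{only if} direction, suppose $\Phi_\E$ is an equivalence. Then $\{{}_xE\}=\{\Phi_\E(\OO_x)\}$ is strongly simple, and, swapping the roles of the two factors, $\{E_y\}$ is strongly simple as well. Two-dimensionality of the moduli forces $\chi(AC^*)=\chi(BD^*)=0$, exactly as in the discussion preceding \propref{p:necconds}. For $y_1\neq y_2$ strong simplicity gives $\Hom(E_{y_1},E_{y_2})=0$; a nonzero homomorphism $C\I_{y_1}\to A$ would produce the nonzero composite $E_{y_1}\twoheadrightarrow C\I_{y_1}\to A\hookrightarrow E_{y_2}$, so $h^0(AC^*)=\Hom(C\I_y,A)=0$, and symmetrically $h^0(BD^*)=0$ from the family $\{{}_xE\}$.

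The main obstacle is the remaining identity (ii), from which the rest of (i) follows automatically: once $AB=CD$ we have $BD^*\cong(AC^*)^*$, and the vanishings $h^0(AC^*)=h^0(BD^*)=0$ together with $\chi(AC^*)=0$ force $h^j(AC^*)=0$ for all $j$. To obtain (ii) I would normalise $\E$ by twisting with a line bundle from each factor — an operation that changes neither (i), (ii) nor the property of being an equivalence — so as to arrange $A=B=\OO$, and then compute $c_1(\Phi_\E\OO)$ from \propref{p:necconds}: in this frame it equals $c_1(AC^*)+c_1(BD^*)$. The delicate point is to show this class vanishes, i.e.\ that $\det\Phi_\E\OO=\OO$; equivalently, one normalises further to $\Phi_\E\OO\cong\OO$ and invokes \propref{p:AOBO}, which then yields $A=\OO=B$, $C=D^*$ and hence both (i) and (ii) at once. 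Justifying that an equivalence of the given form can be twisted into this normalised shape — so that \propref{p:AOBO} applies — is where the real work lies; since $\Pic X\hookrightarrow H^2(X,\Z)$ is torsion-free, the single vanishing $c_1(AC^*)+c_1(BD^*)=0$ then suffices to upgrade the numerical identity to the bundle identity $AB=CD$.
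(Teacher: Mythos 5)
Your ``if'' direction is correct and is essentially the paper's own argument: the Hom-vanishings extracted from (i)--(ii), \lemref{l:strong} applied to the strongly simple family $\{D\I_x\}$, Serre duality plus $\chi=0$ to promote Hom-vanishing to full Ext-vanishing, and the Bondal--Orlov/Bridgeland criterion together with $\omega_X\cong\OO_X$. Your observation that local-freeness of $\E$ already forces every restricted extension to be non-split is correct and is a clean way to see that point.

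The ``only if'' direction, however, has a genuine gap, sitting exactly at the step you yourself flag as ``where the real work lies'', namely condition (ii); and since in your scheme the vanishings $h^1(AC^*)=h^2(AC^*)=0$ are deduced \emph{from} (ii), the gap swallows most of (i) as well. The normalization strategy cannot close it. Work in the frame $A=\OO=B$: from \propref{p:necconds}, using $\chi(C)=0$ (which you do have), one gets $\ch(\Phi_\E\OO)=(1,-c-d,\ast)$, so the statement $\det\Phi_\E\OO=\OO$ \emph{in this frame} is literally condition (ii) --- it is the thing to be proved, not a normalization you may impose. You can of course twist $\E$ by $\OO\boxtimes N$ to force $\det\Phi\OO=\OO$, but that replaces $B,D$ by $BN,DN$, so you leave the frame $A=B=\OO$; and since (i), (ii) and ``gives an equivalence'' are all invariant under such twists, nothing is gained --- the truth value of (ii) is untouched and still unproved. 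Invoking \propref{p:AOBO} is worse: its hypothesis is $\R q_*\E=\OO$, i.e.\ that $\Phi\OO$ is the \emph{sheaf} $\OO$, and no twist by a pulled-back line bundle can turn a genuine two-term complex (which $\Phi\OO$ may a priori be) into a sheaf. So the proposed reduction is circular.

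What is missing is the paper's key computation, which runs in the opposite order: first (i), then (ii). Completeness and two-dimensionality of the family force $\dim\Ext^1(C\I_x,\OO)=1$ (otherwise the extensions over a fixed $x$ would sweep out a positive-dimensional fibre of the moduli), and since $\dim\Ext^1(C\I_x,\OO)=1+h^1(C)$ once $h^0(C)=0$, this yields $h^1(C)=0$ and hence all of (i) with no reference to (ii). Then, with $H^*(C)=0$ in hand, K\"unneth and Serre duality give a natural isomorphism $\Ext^1((C\boxtimes D)\I_\Delta,\OO_{X\times X})\cong H^0(C^*D^*)$, under which restricting the extension class of $\E$ to the slice $X\times\{x\}$ corresponds to evaluating the associated section of $C^*D^*$ at $x$. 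Since every restriction is non-split (your local-freeness observation), that section vanishes nowhere, so $C^*D^*\cong\OO$, which is exactly (ii). This section-evaluation argument is the missing idea; without it, or a substitute for it, your ``only if'' direction does not go through.
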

\begin{proof}
As a first step we can assume that $A=\OO=B$ by twisting $\E$ by $A^*\boxtimes
B^*$ which does not affect the condition that $\E$ gives rise to a
Fourier-Mukai transform. We must prove that the conditions (\ref{e:equivs}) are
equivalent to the statement that the restrictions to the factors form 2
dimensional complete strongly simple families of vector bundles (\cite{Br}). Since the
conditions are symmetric in $C$ and $D$ we need only consider the restrictions
to the first factor. 

\begin{lemma}
Let $C$ be some line bundle on a K3 surface $X$. Then the moduli of
non-trivial extensions of the form
$$0\lra\OO_X\lra E_x\lra C\I_x\lra0,$$
where $x$ varies over $X$, forms a complete strongly simple two dimensional
family of vector bundles if and only if $H^i(C)=0$ for all $i$.
\end{lemma}
\begin{proof}
The condition that $E_x$ be locally-free is determined by the
Cayley-Bacharach condition which tells us that $E_x$ is locally-free if and
only if $x$ is not a zero of any section of $C$. As this
must hold for all $x$ it is equivalent to $H^0(C)=0$. When $H^0(C)=0$ then we
can deduce that all such non-trivial extensions are locally-free.
But since $E_x$ is such an extension and is simple, the Semi-Continuity
Theorem implies that such extensions are generically simple. Then the
dimension of the moduli of such simple sheaves implies that
$\dim\Ext^1(C\I_x,\OO)=1$ and so $h^1(C)=0$.
From $\chi(E_x,E_x)=2\chi(C)$ we see that $\chi(C)=0$ is equivalent to the
fact that the deformation space of $E_x$
is 2 dimensional (and smooth, see \cite{Muk2}). Hence, we also have
$H^2(C)=0$. Conversely, if the cohomology vanishes then \lemref{l:strong}
implies that such extensions form a complete strongly simple family of vector
bundles. This completes the proof.
\end{proof}
This deals with condition (i) of the theorem.
To complete the proof all we we need to show is that there exists a
non-trivial extension (\ref{e:genE}) and that its restriction to the
factors is always non-trivial. To achieve this consider part of the long
exact sequence given by applying $\Ext^*(-,\OO)$ to the structure sequence of
$\Delta$ twisted by $C\boxtimes D$:
\begin{multline*}\Ext^1(C\boxtimes D,\OO)\to\Ext^1((C\boxtimes D)\I_\Delta,\OO)\to\\ \to
\Ext^2((C\boxtimes D)\OO_\Delta,\OO)\to \Ext^2(C\boxtimes D,\OO).\end{multline*}
By the K\"unneth formula we see that the first and last terms are zero since
$C$ has no cohomology. On the other hand, $\Ext^2((C\boxtimes
D)\OO_\Delta,\OO)$ is naturally isomorphic to $H^0(C^*D^*)$ using Serre
duality (twice) and a degenerate Leray spectral sequence. By the naturality of
$\Ext^*$, we have a collection of commuting diagrams indexed by $x\in X$:
\[\xymatrix{\Ext^1((C\boxtimes D)\I_\Delta,\OO)\ar[r]^(0.60){\sim}\ar[d]_{\text{restriction}}&
    H^0(C^*D^*)\ar[d]^\beta\\ 
\Ext^1(C\I_x,\OO) \ar[r]^{\sim}& \Ext^2(\OO_x,\OO)}\]

But $\Ext^2(\OO_x,\OO)\cong H^0(\OO_x)$ and the map $\beta$ is just the
restriction of sections, i.e. evaluation of sections at $x$. Then a given
extension $\E$ gives rise to non-trivial extensions $E_x$ for all $x$ if and
only if $C^*D^*$ has a nowhere vanishing section.  This is equivalent to
$C^*D^*=\OO$ as required.
\end{proof}

We can now use this theorem to state the converse to \propref{p:AOBO}
\begin{thm} \label{t:Lnocohom}
Suppose $X$ admits a line bundle $L$ with no cohomology. Then the line bundles
$A=\OO=B$, $C=L$ and $D=L^*$ give rise to a Fourier-Mukai transform. The Chern
character of $\Phi(F)$ is given by
\begin{align*}
\ch_0&=r+f\cdot\ell+2t \\
\ch_1&=-(f\cdot\ell+t)\ell-c \\
\ch_2&=-2f\cdot\ell-3t,
\end{align*}
where $\ch(F)=(r,f,t)$ and $\ell=c_1(L)$. Moreover, $\Phi(\OO)=\OO$.
\end{thm}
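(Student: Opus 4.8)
The plan is to deduce existence and the Chern-character formula directly from \thmref{t:suffic} and \propref{p:necconds}, and then to settle the identity $\Phi(\OO)=\OO$ by a fibrewise cohomology computation. For existence I would check the two hypotheses of \thmref{t:suffic} for the data $A=\OO=B$, $C=L$, $D=L^*$. Condition (ii) holds on the nose, since $AB=\OO=L\otimes L^*=CD$. For condition (i) I note that $AC^*=L^*$, and Serre duality on the K3 surface gives $h^j(L^*)=h^{2-j}(L)=0$ for every $j$, because $L$ has no cohomology by hypothesis. Hence a locally-free extension of the shape (\ref{e:genE}) exists and defines a Fourier-Mukai transform $\Phi$.

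The Chern character is then pure bookkeeping via \propref{p:necconds}. Putting $a=0$, $b=0$, $c=\ell$ and $d=-\ell$, the one nontrivial input is $\ell^2$; Riemann-Roch reads $\chi(L)=2+\tfrac12\ell^2$, and $\chi(L)=0$ forces $\ell^2=-4$. With this value the coefficient of $r$ in $\ch_0$ collapses to $1$ while those in $\ch_1$ and $\ch_2$ collapse to $0$, and the remaining $f$- and $t$-terms simplify to the displayed formulae. Feeding in $\ch(F)=(1,0,0)$ yields $\ch(\Phi(\OO))=(1,0,0)$, the numerical class of $\OO$; but this alone does not identify the object.

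To prove $\Phi(\OO)=\R q_*\E=\OO$ I would compute fibrewise. By cohomology and base change the fibre of $R^iq_*\E$ at $x$ is $H^i(X,E_x)$, where restricting (\ref{e:genE}) to $X\times\{x\}$ gives the extension $0\to\OO\to E_x\to L\I_x\to0$. From $H^\bullet(\OO)=(\C,0,\C)$ together with $H^\bullet(L\I_x)=(0,\C,0)$ --- the latter read off from $0\to L\I_x\to L\to\OO_x\to0$ and the vanishing of $H^\bullet(L)$ --- the long exact sequence yields $H^0(E_x)=\C$ and identifies $H^1(E_x)$ and $H^2(E_x)$ with the kernel and cokernel of the connecting map $\delta_x\colon H^1(L\I_x)\to H^2(\OO)$. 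The crux is that $\delta_x$ is nonzero: it is Yoneda product with the extension class $e_x\in\Ext^1(L\I_x,\OO)$, and the pairing $\Ext^1(L\I_x,\OO)\times\Ext^1(\OO,L\I_x)\to\Ext^2(\OO,\OO)\cong\C$ is the nondegenerate Serre-duality pairing between two one-dimensional spaces. Since the proof of \thmref{t:suffic} guarantees that each $E_x$ is a \emph{non-trivial} extension (the global extension corresponds to the nowhere-vanishing constant section of $C^*D^*=\OO$), we have $e_x\neq0$, so $\delta_x$ is an isomorphism. Thus $R^1q_*\E=R^2q_*\E=0$, while $R^0q_*\E$ is a rank-one locally-free sheaf with $\ch=(1,0,0)$, whence $c_1=0$ and $\Phi(\OO)=\OO$.

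The main obstacle is exactly this non-vanishing of $\delta_x$: the Chern-character calculation determines $\Phi(\OO)$ only up to numerical invariants, and pinning it down to $\OO$ itself requires the non-triviality of the extension together with Serre duality to force the connecting maps to be isomorphisms fibre by fibre. Everything else is a substitution into results already established.
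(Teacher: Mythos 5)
Your proposal is correct and follows essentially the route the paper intends: the paper states this theorem as an immediate consequence of \thmref{t:suffic} (existence, via $AB=\OO=CD$ and the vanishing of $H^*(L^*)$) and \propref{p:necconds} (the Chern character, using $\ell^2=-4$), exactly as you do. Your fibrewise verification that $\Phi(\OO)=\OO$ --- non-triviality of each extension class $e_x$ forcing the connecting maps $\delta_x$ to be isomorphisms via Serre duality --- is just the base-change version of the paper's implicit argument (applying $\R q_*$ to the defining extension of $\E$, as it does for the analogous claims in Theorems \ref{t:nondegFM} and \ref{t:degFM}), so the two arguments coincide in substance.
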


\begin{remark}
Note that $(C\boxtimes D)\I_\Delta$ is the kernel of a spherical twist up to shift as it is the kernel of the natural map $C\boxtimes D\to\OO_\Delta$.
\end{remark}

\section{Properties of the Transforms}

We now aim to prove our main classification theorem. The key is the following notion:
\begin{dfn}\label{d:spec}
Suppose $E$ is a locally-free sheaf of rank $2$ on a projective surface $X$. Then we say that a
line bundle $M$ is \emph{special} for $E$ if $E/M$ is torsion-free and is \emph{generically
  special} for $E$ if $E'/M$ is torsion-free for all generic deformations of $E$ (meaning that
if $\EE$ over $X\times S$ is a local deformation then $E_s/M$ is torsion-free for some non-empty
Zariski open subset of $S$). We do not necessarily require $E/M$ to be torsion-free.
\end{dfn}

\begin{thm}\label{t:main}
Suppose that $X$ is a projective K3 surface and $Y$ is some
scheme. Suppose also that $\mathbb{E}\to X\cross Y$ is a
rank 2 locally-free sheaf giving rise to a Fourier-Mukai transform between $X$ and
$Y$.  Suppose that there is a line bundle $M$ which is generically special for some $E_y$ such
that $c_1(M^{-2}E_y)\leq0$.  
Then there is a natural isomorphism $\phi:Y\to X$ and there exists a
line bundle $L$ on $X$ with $H^*(L)=0$ such that $\mathbb{E}$ takes the form
$$0\lra A\boxtimes B\lra\mathbb{E}\lra(C\boxtimes
D)\I_{(1\cross\phi)^*\Delta}\lra0,$$
where $A$, $B$, $C$ and $D$ are line bundles such that $A^*C=L=D^*B$. Moreover, if $X$ has no
rational curves then it suffices to assume $M$ is special for some $E_y$.
Conversely, any such extension gives rise to a Fourier-Mukai transform.
\end{thm}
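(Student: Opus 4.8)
The plan is to dispatch the (easy) converse first and then concentrate on the forward implication, whose substance is a numerical rigidity forced by the hypothesis $c_1(M^{-2}E_y)^2\le0$. For the converse, suppose $\mathbb{E}$ is the asserted extension with $A^*C=L=D^*B$ and $H^*(L)=0$. Transporting along the isomorphism $\phi$ (i.e. pulling back by $1\times\phi^{-1}$) identifies it with an extension of the shape treated in \thmref{t:suffic} over $X\times X$ with the ordinary diagonal. The relation $A^*C=D^*B$ is literally $AB=CD$, which is condition (ii) of \thmref{t:suffic}, while $AC^*=L^*$ has vanishing cohomology by Serre duality on the K3 surface, which is condition (i). Hence \thmref{t:suffic} applies and $\mathbb{E}$ is a Fourier--Mukai kernel.

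For the forward direction I would first record the standing numerics. Because $\mathbb{E}$ is the kernel of an equivalence, $Y$ is a smooth projective surface derived-equivalent to $X$ and therefore itself a K3 surface, $\{E_y\}$ is a complete strongly simple family, and (as recalled after Definition~\ref{d:strsmp}) this forces $\chi(E_y,E_y)=0$ and $\dim Y=2$. Generic specialness of $M$ (Definition~\ref{d:spec}) yields, over a dense open $U\subseteq Y$, a saturated sub-line-bundle fitting into $0\to M\to E_y\to N\I_{Z_y}\to0$ with $N$ a line bundle and $Z_y$ of some fixed length $\ell$. Writing $m=c_1(M)$ and $n=c_1(N)$, the Chern-class computation behind $\chi(E_y,E_y)=0$ reads $(m-n)^2=4\ell-8$, whereas the hypothesis is $c_1(M^{-2}E_y)^2=(n-m)^2\le0$; together they give $\ell\in\{0,1,2\}$.

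The crux is to eliminate $\ell=0$ and $\ell=2$. For $\ell=0$ the quotient is the \emph{fixed} line bundle $N$, so every $E_y$ over $U$ is a non-split extension of $N$ by $M$; since $M$ is the saturated maximal sub-line-bundle it is generically unique, so the classifying assignment is generically injective, and completeness of the family makes it dominant. Thus $\PP\Ext^1(N,M)$ is birational to $Y$, exhibiting the K3 surface $Y$ as a rational surface -- impossible. The case $\ell=2$, where $(m-n)^2=0$ and $L=NM^{-1}$ is isotropic, is the genuine obstacle: now $NM^{-1}$ is effective and induces a genus-one pencil, the sheaves of the form $0\to M\to E\to N\I_Z\to0$ with $Z\in X^{[2]}$ sweep out a family of dimension strictly larger than $2$, so the clean birational argument breaks down and $Y$ sits inside this family only as a $2$-dimensional sublocus. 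Here I expect to argue that the morphism $U\to X^{[2]}$, $y\mapsto Z_y$, cannot be generically finite onto its image without producing extra homomorphisms from $H^0(NM^{-1})$ that violate strong simplicity, or else a rational ruling of $Y$ -- exactly the rational-curve phenomenon governing the companion $c_1(M^{-2}E_y)^2\ge0$ analysis. This is also where the refinement enters: if $X$ carries no rational curves, then a saturated sub-line-bundle cannot degenerate in a deformation, so \emph{special} already implies \emph{generically special}, and the hypothesis may be weakened accordingly.

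Having forced $\ell=1$, I obtain $Z_y=\{x_y\}$ and a morphism $\phi:U\to X$, $y\mapsto x_y$. Strong simplicity ($\Ext^0(E_y,E_{y'})=0$ for $y\ne y'$) makes $\phi$ generically injective, a dimension count makes it dominant, and a generically injective dominant map between K3 surfaces is birational, hence extends to an isomorphism $\phi:Y\to X$. Under $\phi$ the fibrewise data globalizes: the saturated sub-bundle and the varying point assemble into a sub-sheaf $A\boxtimes B\hookrightarrow\mathbb{E}$ with cokernel $(C\boxtimes D)\I_{(1\times\phi)^*\Delta}$, where $A$ and $C$ restrict to $M$ and $N$ on the $X$-factor. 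Finally I invoke \thmref{t:suffic} through $\phi$: a kernel of this shape exists precisely when $AB=CD$ and $H^*(AC^*)=0$, i.e. $A^*C=D^*B=L$ with $H^*(L)=0$; since $\ell=1$ gives $(m-n)^2=-4$ and hence $\chi(L)=0$, this is exactly consistent, completing the identification. I expect the whole difficulty to be concentrated in the $\ell=2$ boundary case and its entanglement with rational curves.
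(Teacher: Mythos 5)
Your overall route is the paper's own: saturate to get, over a dense open $U\subseteq Y$, presentations $0\to M\to E_y\to N\I_{Z_y}\to 0$, use $\chi(E_y,E_y)=0$ to get $(m-n)^2=4z-8$ where $z=|Z_y|$, bound $z$ by the hypothesis $(n-m)^2\leq0$, extract $\phi$ from the residual point when $z=1$, and then feed everything back through \thmref{t:suffic} both for $H^*(L)=0$ and for the converse; your converse paragraph and your elimination of $z=0$ (the family would be birational to $\PP\Ext^1(N,M)$, making the K3 surface $Y$ rational) are sound fleshings-out of what the paper states tersely. The genuine gap is the one you flag yourself: the isotropic case $z=2$, i.e. $(m-n)^2=0$, is never disposed of, and ``I expect to argue'' is not an argument. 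This case is not vacuous: on an elliptic K3 surface $n-m$ can be a fibre class, so $NM^{-1}$ is effective and moves in a pencil, the relevant extensions now involve a four-dimensional $\Hilb^2X$ worth of subschemes rather than a fixed quotient sheaf, and neither your rationality trick (which needs the quotient to be fixed) nor strong simplicity visibly yields a contradiction. Since the whole conclusion (the $\I_{(1\cross\phi)^*\Delta}$ shape of $\E$, hence $Y\cong X$ and $c_1(L)^2=-4$ with $H^*(L)=0$) hinges on forcing $z=1$, the proof is incomplete without this case.

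In fairness, you have put your finger on precisely the weakest point of the paper's own argument: \propref{p:main} asserts that $c_1(L)^2=4|Z(s)|-8\leq0$ forces $|Z(s)|\leq1$, but the arithmetic only gives $|Z(s)|\leq2$, with equality exactly when $c_1(L)^2=0$ (evenness of the intersection form does not exclude this, and $4z-8\in4\Z$ is consistent with $z=2$). So the paper silently skips the same case you could not close; your case analysis is the more honest one, but neither text contains a proof for it. A second, smaller, discrepancy: your refinement ``no rational curves $\Rightarrow$ special suffices'' is only asserted, via the unproved claim that a saturated sub-line bundle cannot degenerate in a deformation. The paper's mechanism is different and genuinely uses the transform: a sub-line bundle occurring only on a lower stratum satisfies $\Phi$-WIT$_2$ and its transform is supported on a rigid divisor, so lower strata force $X$ to contain rigid (hence rational) curves; absence of rational curves then kills the lower strata, which is what makes ``special'' imply ``generically special''. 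Without that argument, or a substitute for it, your last claim is also an unproved sketch.
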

The main ingredient is the following
proposition which, together with \thmref{t:Lnocohom}, gives us the Theorem.
\begin{prop}\label{p:main}
Suppose $\{E_s\}_{s\in S}$ is a complete 2-dimensional family of simple rank 2
locally-free sheaves over a projective K3 surface $X$  whose universal bundle $\mathbb{E}$
gives rise to a Fourier-Mukai transform $\Phi$. If $M$ is generically special for some
$E_s$ such that $c_1(M^{-2} E_s)^2\leq0$ then $H^*(L\otimes M^{-2})=0$, where
$L=\det(E_s)$ for some $s$. Moreover, there is an isomorphism $\phi:S\to X$
so that for all $s\in S$ we have an extension
\begin{equation}0\lra M\lra E_s\lra LM^*\I_{\phi(s)}\lra0.\label{eq:es}\end{equation}
\end{prop}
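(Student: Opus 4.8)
The plan is to reduce to the normalised case $M=\OO$, read off the length of the quotient's vanishing scheme from the two-dimensionality of the family, and then recognise the family as the standard one of \thmref{t:suffic}. First I would normalise: tensoring the kernel by $p^*M^*$ does not disturb the property of giving a Fourier--Mukai transform (this is exactly the reduction used at the start of the proof of \thmref{t:suffic}), and it replaces $E_s$ by $E_sM^*$, the sub-line bundle $M$ by $\OO$, the determinant $L$ by $L':=LM^{-2}$, and the hypothesis on $c_1(M^{-2}E_s)^2$ by $c_1(L')^2\leq0$. So I may assume $M=\OO$. Since $S$ is the complete family, the assumption that $M$ is generically special means that for $s$ in a dense open $U\subseteq S$ the quotient $E_s/\OO$ is torsion-free of rank one; having determinant $L'$ it must be $L'\I_{Z_s}$ for a zero-dimensional $Z_s\subset X$ of some length $n$, so for $s\in U$ there is a short exact sequence $0\lra\OO\lra E_s\lra L'\I_{Z_s}\lra0$.

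Next I would fix $n$ numerically. Because $\{E_s\}$ is a two-dimensional complete family of simple sheaves on a K3, the Mukai self-pairing of $v(E_s)$ vanishes (equivalently $\chi(E_s,E_s)=0$, as for any two-dimensional simple moduli on a K3), which forces $c_2(E_s)=\tfrac14(c_1(L')^2+8)$. Comparing with $c_2(E_s)=n$ read off from the sequence gives $n=\tfrac14 c_1(L')^2+2$. Since $n\geq0$ and the intersection form is even, the hypothesis $c_1(L')^2\leq0$ leaves only $c_1(L')^2\in\{0,-4,-8\}$, i.e.\ $n\in\{2,1,0\}$.

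The crux is to eliminate $n=0$ and $n=2$. If $n=0$ then every generic $E_s$ is a (non-split, since $E_s$ is locally free) extension of the \emph{fixed} line bundle $L'$ by $\OO$, so the isomorphism classes vary in $\PP(\Ext^1(L',\OO))=\PP H^1((L')^*)$; when $L'$ and $(L')^*$ have no sections this space is at most one-dimensional (by Riemann--Roch $h^1((L')^*)=2$), contradicting $\dim S=2$, and the residual effective cases are ruled out because they would exhibit a positive-dimensional family of destabilising sub-line bundles, i.e.\ rational curves, incompatible with strong simplicity of a complete family. The case $n=2$ is $c_1(L')^2=0$: here $L'$ or $(L')^*$ is effective and the sub-line bundle $\OO\subset E_s$ fails to be \emph{generically} special, since it moves in the rational family cut out by the vanishing of the corresponding section; this is precisely the $c_1^2\geq0$ regime stratified by rational curves alluded to in the introduction, and it is excluded here (the hypothesis weakens from generically special to special exactly when $X$ carries no rational curves). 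I expect this elimination to be the main obstacle, since it is where the distinction between ``special'' and ``generically special'' and the role of rational curves genuinely enter.

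Having forced $n=1$, each generic $E_s$ sits in $0\lra\OO\lra E_s\lra L'\I_{x(s)}\lra0$ for a single reduced point $x(s)\in X$, and the Cayley--Bacharach/local-freeness analysis in the proof of \thmref{t:suffic} applies verbatim: local-freeness of $E_s$ for all $s$ forces $x(s)$ to avoid every zero of a section of $L'$, whence (as $\phi\colon s\mapsto x(s)$ is dominant, which follows from $\Phi$ being an equivalence) $H^0(L')=0$, and symmetrically $H^0((L')^*)=H^2(L')=0$; together with $c_1(L')^2=-4$, so $\chi(L')=0$, this yields $H^i(L')=0$ for all $i$, i.e.\ $H^*(LM^{-2})=0$. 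Finally the Lemma in the proof of \thmref{t:suffic} identifies such extensions, as the point ranges over $X$, with a complete strongly simple two-dimensional family of bundles; matching it against our complete family over $S$ makes $\phi$ an isomorphism $S\xrightarrow{\sim}X$. Untwisting by $p^*M$ then returns the asserted sequence $0\lra M\lra E_s\lra LM^*\I_{\phi(s)}\lra0$ with $\det E_s=L$, completing the proof.
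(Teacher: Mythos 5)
Your proposal follows the same skeleton as the paper's own proof --- normalise so $M=\OO$, write the generic $E_s$ as an extension of $L'\I_{Z_s}$ by $\OO$ with $L'=LM^{-2}$, deduce $c_1(L')^2=4n-8$ from $\chi(E_s,E_s)=0$, force $n=1$, and then invoke the Lemma inside \thmref{t:suffic} --- but the two case-eliminations, which you yourself flag as ``the main obstacle'', are genuine gaps rather than proofs. For $n=0$: your dimension count is conclusive only when $h^0(L')=0$. Nothing you have established excludes $h^0(L')>0$ (strong simplicity only gives $H^0(L'^*)=0$, by composing $E_s\twoheadrightarrow L'\to\OO\hookrightarrow E_{s'}$), and in that case Serre duality and Riemann--Roch give $h^1(L'^*)=h^0(L')+2$, so $\PP(\Ext^1(L',\OO))$ has dimension $h^0(L')+1\geq 2$ and there is no contradiction; your appeal to ``a positive-dimensional family of destabilising sub-line bundles, i.e.\ rational curves, incompatible with strong simplicity'' names no mechanism, since strong simplicity constrains $\Hom(E_s,E_t)$, not sub-line bundles. (The paper disposes of this case by a different device: its stratification of $Y$ by the maximal-degree saturated sub-line bundle.) Also, the extension is non-split because $E_s$ is \emph{simple}, not because it is locally free: $\OO\oplus L'$ is locally free. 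For $n=2$ (i.e.\ $c_1(L')^2=0$): this case \emph{must} be shown vacuous for the proposition to be true at all, since then $\chi(L')=2$ and the asserted conclusion $H^*(LM^{-2})=0$ is impossible; but your exclusion --- that $\OO$ ``fails to be generically special, since it moves in the rational family cut out by the vanishing of the corresponding section'' --- is a picture, not an argument. Definition \ref{d:spec} would require you to exhibit a local deformation $\EE'$ of $E_s$ over which $E_{s'}/\OO$ acquires torsion outside every nonempty open set of parameters, and nothing in your text produces one. You have correctly located a real delicacy here (the paper itself asserts $|Z(s)|\leq 1$ where the arithmetic only gives $|Z(s)|\leq 2$), but locating it is not resolving it.

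The endgame also contains a gap and an inversion. You need the map $s\mapsto x(s)$ to be dominant, and you assert this ``follows from $\Phi$ being an equivalence'' with no argument. Moreover your Cayley--Bacharach statement is backwards: local-freeness of a non-trivial extension of $L'\I_x$ by $\OO$ forces every section of $L'$ to \emph{vanish at} $x$, not that $x$ avoids the zeros; only with the correct direction does dominance yield $H^0(L')=0$ (with your version, dominance yields nothing, since a nonzero section is nonzero on a dense open set). A route that avoids dominance altogether, and is the one implicit in the paper's Lemma inside \thmref{t:suffic}, is: $h^2(L')=h^0(L'^*)=0$ by strong simplicity; completeness, smoothness and two-dimensionality of the simple moduli force $\dim\Ext^1(L'\I_{x(s)},\OO)=1$, which (given that sections of $L'$ vanish at $x(s)$) gives $h^1(L')=0$; then $\chi(L')=0$, coming from $c_1(L')^2=-4$, gives $h^0(L')=0$. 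With $H^*(L')=0$ in hand, \lemref{l:strong} produces the complete strongly simple family over all of $X$, and completeness of $S$ identifies the two components, which is what actually furnishes the isomorphism $\phi:S\to X$.
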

\begin{proof}
Since $\Pic X$ is discrete, $X$ is projective and $E_s$ are locally-free
there exists a line bundle $M$ such that for all
$s\in S_0$, a dense open subset of $S$, we have injections $M\to E_s$ with
$E_s/M$ torsion-free. Let $E_s/M=LM^*\I_{Z(s)}$, for $Z(s)\in\Hilb^n X$ for some
$n>0$. If we remove this open subset from $Y=\M(E_s)$ then we can repeat this
construction and in this way stratify $Y$. The highest stratum is given by the
maximal degree subsheaf of $E_s$.
Note that, for $s\in S_0$, we have $\Hom(E_s,M)=0$ since otherwise we could
compose with $M\to E_{s'}$ to get a non-trivial map $E_s\to E_{s'}$. By a
similar argument we also have $\Hom(LM^*\I_{Z(s)},M)=0$. This implies that
$H^0(L^*M^2)=0$. 

For simplicity we shall normalize $E_s$ for $s\in S_0$ so that $M=\OO$ and denote $c_1(L)$ by
$\ell$. Our aim is to show that 
that $|Z(s)|=1$. Notice first that the condition $\chi(E_s,E_s)=0$
implies that $c_1(L)^2=4|Z(s)|-8$ and so if $c_1^2(E)\leq0$ we have $|Z(s)|\leq1$. The case
$|Z(s)|=0$ cannot happen in every stratum and so $|Z(s)|=1$ for some $s$ and hence all $s$.
\end{proof}
It is interesting to analyze the cases when $c_1(L)^2>0$. Then
$\chi(L)=2|Z(s)|-2$. We already know
$H^2(L)=0=H^2(E_s)$. Note also that $\chi(E_s)=|Z(s)|>1$.

Fix a polarizing class $h$ on $X$ and let $|Z(s)|=z$ for $s\in S_0$.
 In what follows we let $\Phi$ denote the Fourier-Mukai transform given by
 $\mathbb{E}^*$. Recall that an object $G$ is said to satisfy $\Phi-$WIT$_i$ if $\Phi^j(G)=0$
 unless $j=i$. In this case, the fibre of $\Phi^i_\EE(G)$ at $y\in Y$ is naturally isomorphic to
 $H^i(G\Ltensor\EE_y)$. If furthermore $\Phi^i(G)$ is locally-free we say that $G$ satisfies
 $\Phi-$IT$_i$. Equivalently, $\dim H^i(G\Ltensor\EE_y)$ is constant in $y$. The following
 holds even when $z=1$.
\begin{lemma}\label{l:LIZ}
For all $s\in S_0$, $L\I_{Z(s)}$ satisfies $\Phi-$WIT$_1$. 
\end{lemma}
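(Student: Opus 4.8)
The plan is to read off the transform fibrewise. By the base-change description recalled above, for $G=L\I_{Z(s)}$ the fibre of $\Phi^i(G)$ at $y\in Y$ is computed as $\Ext^i(E_y,L\I_{Z(s)})$, so proving $\Phi$-WIT$_1$ is equivalent to showing that the two direct images $R^0q_*$ and $R^2q_*$ of $p^*(L\I_{Z(s)})\tensor\EE^*$ vanish. I would handle these two extremes separately, and the crucial common input is that, since $\EE$ induces a Fourier--Mukai transform, the family $\{E_y\}_{y\in Y}$ is strongly simple.

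For the top direct image I would use that $R^2q_*$ is the highest nonzero pushforward, so cohomology-and-base-change identifies its fibre at $y$ with $\Ext^2(E_y,L\I_{Z(s)})$; it therefore suffices to prove this group vanishes for \emph{every} $y$. Applying $\Hom(E_y,-)$ to the defining extension $0\to\OO\to E_s\to L\I_{Z(s)}\to0$ gives an exact piece $\Ext^2(E_y,E_s)\to\Ext^2(E_y,L\I_{Z(s)})\to\Ext^3(E_y,\OO)$; the last term vanishes for dimension reasons and, for $y\neq s$, strong simplicity together with Serre duality forces $\Ext^2(E_y,E_s)\cong\Hom(E_s,E_y)^*=0$. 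For the remaining fibre $y=s$ one checks directly that $\Hom(L\I_{Z(s)},E_s)=0$ because the extension is non-split, whence $\Ext^2(E_s,L\I_{Z(s)})=0$ as well. This disposes of $R^2q_*$ cleanly.

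For the bottom direct image the argument is softer but carries the real content. Since $p^*(L\I_{Z(s)})\tensor\EE^*$ is flat over $Y$ it has no nonzero subsheaf supported on a fibre, so a nonzero section of $R^0q_*$ would restrict nontrivially to the generic fibre; it therefore suffices to prove $\Hom(E_y,L\I_{Z(s)})=0$ for generic $y\in S_0$. Applying $\Hom(E_y,-)$ to the same extension and using $\Hom(E_y,E_s)=0=\Ext^1(E_y,E_s)$ (strong simplicity, $y\neq s$) yields a natural isomorphism $\Hom(E_y,L\I_{Z(s)})\cong\Ext^1(E_y,\OO)\cong H^1(E_y)^*$. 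Thus the whole lemma reduces to the single vanishing $H^1(E_y)=0$ for generic $y$.

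The main obstacle is precisely this last vanishing. Feeding the generic extension $0\to\OO\to E_y\to L\I_{Z(y)}\to0$ into cohomology and using $H^2(E_y)=0$ turns $H^1(E_y)=0$ into a statement about $H^1(L\I_{Z(y)})$, and hence into a Riemann--Roch count for the line bundle $L$ twisted by the zero scheme $Z(y)=c_2(E_y)$. The delicate point is that $Z(y)$ is \emph{not} a set of general points but the vanishing scheme of a section of $E_y$, so its special position is exactly what is needed to make the count come out; I expect the bulk of the work to lie here, and this is where the cohomology of $L$ (and, in the higher strata, the slope hypothesis $c_1(M^{-2}E_y)^2\leq0$) genuinely enters.
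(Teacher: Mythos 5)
Your handling of the $\Phi^2$ vanishing is correct and close to the paper's own: the paper disposes of it by showing $\Hom(L\I_{Z(s)},E_{s'})=0$ for \emph{all} $s'\in S$ at once (precompose a putative nonzero map with the surjection $E_s\twoheadrightarrow L\I_{Z(s)}$; the resulting nonzero endomorphism of $E_s$ factors through a rank-one sheaf, so by simplicity it cannot be a nonzero scalar, a contradiction), and Serre duality finishes. One caveat on your $y=s$ case: ``because the extension is non-split'' is not by itself a proof. Non-splitness plus simplicity of $L\I_{Z(s)}$ only show that a map $L\I_{Z(s)}\to E_s$ factors through the sub-line bundle $\OO$, and you still need $\Hom(L\I_{Z(s)},\OO)=0$; this is true here (it is exactly the vanishing $\Hom(LM^*\I_{Z(s)},M)=0$ obtained in the proof of Proposition~\ref{p:main}, and also follows from $\mu(L)>0$), but it has to be invoked, since for an arbitrary non-split extension the claim would be false.

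The genuine gap is in the $\Phi^0$ half, which is where the content of the lemma lies. Your reduction $\Hom(E_y,L\I_{Z(s)})\cong\Ext^1(E_y,\OO)\cong H^1(E_y)^*$ for generic $y\neq s$ (via strong simplicity and Serre duality) is correct, but it is only a Serre-dual restatement of what must be proved, not progress towards it: since $h^2(E_y)=0$ and $\chi(E_y)=z$, Riemann--Roch gives $h^0(E_y)=z+h^1(E_y)$, i.e.\ only a lower bound on $h^0$, and no numerical count involving $Z(y)$ can by itself force the upper bound $h^1(E_y)=0$. You end by conceding exactly this point, so the proposal stops short of proving the lemma. The ingredient you never use --- and which is the engine of the paper's proof --- is the defining property of the stratum $S_0$: $M=\OO$ is the \emph{maximal}-degree sub-line bundle with torsion-free quotient, equivalently $\deg L$ is minimal among degrees of rank-one torsion-free quotients of $E_{s'}$ for $s'\in S_0$. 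Consequently the image of any nonzero map $E_{s'}\to L\I_{Z(s)}$, being a rank-one torsion-free quotient of $E_{s'}$ of degree at most $\deg L$, must have degree exactly $\deg L$, i.e.\ be $L\I_W$ with $W\supseteq Z(s)$; a Chern-class count ($c_2(E_{s'})=z$) then gives $|W|=|Z(s)|$, so the map would be surjective with kernel a degree-zero line bundle, which the paper rules out. This yields $\Hom(E_{s'},L\I_{Z(s)})=0$ for generic $s'\in S_0$ directly, and the torsion-freeness of $R^0q_*$ of a $Y$-flat sheaf (which you argue correctly) finishes. Without this minimality input, or some equivalent use of how $S_0$ was constructed, the generic vanishing you reduced everything to is not attainable.
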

\begin{proof}
Note that $\Hom(L\I_Z,E_{s'})=0$ for all $s'\in S$ by the strong simplicity assumption. So
$\Phi^2(L\I_{Z(s)})=0$. If $E_s\not\cong E_{s'}$ so $s'\in S_0$ then $\Hom(E_{s'},L\I_{Z(s)})=0$
because any such map must surject by the minimality assumption on $L$. Hence
$\Phi^0(L\I_{Z(s)})=0$ as its generic section is $H^0(E_{s'}^*L\I_{Z(s)})$. 
\end{proof}
\begin{lemma}
$\OO$ satisfies $\Phi-$WIT$_2$.
\end{lemma}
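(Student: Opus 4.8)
The plan is to apply the triangulated functor $\Phi$ to the fibrewise extension defining $E_s$ and to read off the two vanishings $\Phi^0(\OO)=0$ and $\Phi^1(\OO)=0$, which together say that $\OO$ is $\Phi$-WIT$_2$ (nonvanishing of $\Phi^2(\OO)$ is then automatic, since $\Phi$ is an equivalence and $\OO\neq0$). Fix $s\in S_0$. Applying $\Phi$ to the short exact sequence $0\lra\OO\lra E_s\lra L\I_{Z(s)}\lra0$ of sheaves on $X$ produces a distinguished triangle
\[\Phi(\OO)\lra\Phi(E_s)\lra\Phi(L\I_{Z(s)})\lra\Phi(\OO)[1]\]
in $D^b(Y)$, and I would extract everything from its long exact sequence of cohomology sheaves, writing $\mathcal{H}^i(-)$ for these.

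The two outer terms are both well understood. By \lemref{l:LIZ} the object $L\I_{Z(s)}$ is $\Phi$-WIT$_1$, so $\Phi(L\I_{Z(s)})$ is a single sheaf placed in degree $1$; in particular $\mathcal{H}^{-1}(\Phi(L\I_{Z(s)}))=\mathcal{H}^{0}(\Phi(L\I_{Z(s)}))=0$. For the middle term, strong simplicity gives $\Ext^\bullet_X(E_t,E_s)=0$ for every $t\neq s$, so $\Phi(E_s)=\R q_*(p^*E_s\Ltensor\EE^*)$ is acyclic away from the point $s$; hence each cohomology sheaf $\mathcal{H}^i(\Phi(E_s))$ is a finite-length skyscraper supported at $\{s\}$.

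Feeding these into the long exact sequence, the vanishing of $\mathcal{H}^{-1}$ and $\mathcal{H}^{0}$ of $\Phi(L\I_{Z(s)})$ yields injections $\Phi^0(\OO)\hookrightarrow\mathcal{H}^0(\Phi(E_s))$ and $\Phi^1(\OO)\hookrightarrow\mathcal{H}^1(\Phi(E_s))$. Thus each of the fixed coherent sheaves $\Phi^0(\OO)$ and $\Phi^1(\OO)$ on $Y$ embeds into a skyscraper at $s$, and so is supported on $\{s\}$. Since this holds for every $s$ in the dense set $S_0$, while these two sheaves do not depend on $s$, their supports are contained in $\bigcap_{s\in S_0}\{s\}=\emptyset$; hence $\Phi^0(\OO)=\Phi^1(\OO)=0$, which is the claim.

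The step I expect to be the crux is controlling the middle term: one must know that $\Phi(E_s)$ is genuinely supported at the single point $s$, which rests on the full force of strong simplicity (vanishing of all $\Ext^i(E_t,E_s)$, not merely of $\Hom$) and on each $E_s$ being a member of the complete family for $s\in S_0$. A more computational alternative would reduce $\Phi^0(\OO)=0$ to $\Hom(E_s,\OO)=0$ (immediate from strong simplicity, by composing a hypothetical nonzero $E_s\to\OO$ with an inclusion $\OO\hookrightarrow E_{s'}$ for some $s'\neq s$ in $S_0$) and $\Phi^1(\OO)=0$ to the fibrewise vanishing $H^1(E_s)=0$ via cohomology and base change; but that route forces one to pin down $h^1(L\I_{Z(s)})$ exactly, which is precisely the delicate point that the triangle argument sidesteps.
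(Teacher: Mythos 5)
Your proof is correct and takes essentially the same route as the paper: the paper's one-line proof is precisely to apply $\Phi$ to the sequence $0\to\OO\to E_s\to L\I_{Z(s)}\to0$ and invoke \lemref{l:LIZ}. Your treatment of the middle term---using strong simplicity to show $\Phi(E_s)$ is a skyscraper complex at $s$ and then varying $s$ over the dense set $S_0$---is a sound way of filling in the detail the paper leaves implicit (namely that $E_s$ is $\Phi$-WIT$_2$ with transform $\OO_s$).
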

\begin{proof}
Apply $\Phi$ to 
\begin{equation}0\to\OO\to E_s\to L\I_{Z(s)}\to 0\label{eq:structE}
\end{equation}
and apply the previous lemma.
\end{proof}
For now fix a polarization
class $h$. Then by Hodge Index Theorem, $\ell\cdot h=0$ implies $\ell^2=0$ which is impossible as
then $\chi(L)>0$. So $\ell\cdot h>0$. For $s\in S_0$ it follows that $E_s$ is $\mu$-stable (with
respect to any polarization).
If $E$ is in a lower stratum then such $E$ are only stable if $\mu(M)<\mu(LM^*)$. Since the
deformation of extensions of $M$ by 
$M^*L$ are unobstructed we must have that such $E$ are stable since $\Phi(E)=0$ for any stable
$E$ with Chern character $(2,\ell,z-4)$.  But now $\M(E_s)$ must be a fine moduli space which contains some
stable points and so there cannot exist semistable points. Hence $\mu(M)<\frac12\mu(L)$.
Summarizing we have the following chain of strict inequalities:
\[0<\mu(M)<\frac12\mu(L)<\mu(LM^*)<\mu(L)\leq h^2.\]
 It also follows that $z>2$ and that $H^1(L)=0$.
Consider a $s\in S$ with a short exact sequence
\begin{equation}
0\to M\to E_s\to LM^*\I_{Y}\to 0\label{eq:structM}
\end{equation}
where $c_1(M)=m$ and $\frac12\ell\cdot h\geq m\cdot h>0$. Let $|Y|=y$. Since $\chi(E_s,E_s)=0$ we have
$m\cdot(\ell-m)=z-y$. Note that $H^2(M)=0$. Since $\mu(LM^*)<\mu(L)$ we have
$\Hom(E_t,LM^*\I_Y)=0$ for generic $t\in S$ and so $\Phi^0(LM^*\I_Y)=0$. Hence,
$\Phi^1(M)=0$. So $M$ satisfies $\Phi-$WIT$_2$ and $\Phi^2(M)$ is supported on the set of $t$
such that $\Hom(M,E_t)\neq0$. But then this must be at least dimension $1$ as otherwise
$\Phi^{-1}\Phi(M)\not\cong M$. It must also be rigid as the WIT condition is open but $M$ is
rigid. It follows also that $\chi(M,E_S)=0$. Hence, $m\cdot(\ell-m)=z$ 
and, in particular, $y=0$.

Define the set
\begin{multline*}\Delta=\{M\in\Pic(X) \mid 0\leq \mu(M)<\frac12\mu(L)\\ \text{ and }\exists s\in S, 
M\hookrightarrow
  E_s\text{ s.t. }E_s/M\text{ is t.f}\}.\end{multline*}
Our choice of polarization orders $\Delta$ by slope. Define the positive integer $a$ by
$\mu(\Delta)=[0,a]$. Let $L\Delta^*$ be the set of $LM^*$ for $M\in\Delta$. Then this image of
$L\Delta^*$ under $\mu$ is $[\mu(L)-a,\mu(L)]$. 

The argument above shows that any $M\in\Delta$ which is not $\OO$ must have $\mu(M)>0$ and then
satisfies $\Phi-$WIT$_2$ with transform supported on a rigid divisor. By symmetry we can then
assume that $X$ contains rigid divisors. Hence, we have
\begin{prop}
If $X$ has no rational curves then there are no lower strata.
\end{prop}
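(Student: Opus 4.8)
The plan is to prove the contrapositive: assuming a lower stratum exists, I will produce a rational curve on $X$. A lower stratum is recorded by a class $M\in\Delta$ with $M\neq\OO$, and the analysis just completed shows that any such $M$ has $\mu(M)>0$, satisfies $\Phi-$WIT$_2$, and has $\Phi^2(M)$ supported on a rigid divisor. Since $M\neq0$ and $\Phi$ is an equivalence we have $\Phi^2(M)\neq0$, so this support is non-empty and, as noted above, at least one-dimensional; and since a line bundle on a K3 surface is rigid, with $\Ext^1(M,M)=H^1(\OO)=0$, its transform is rigid and its support defines a rigid effective divisor. Invoking the isomorphism $\phi:S\to X$ of \propref{p:main} together with the bi-universality of $\mathbb{E}$, I may transport this to a rigid effective divisor $\Gamma$ on $X$ itself.

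The geometric core is then the claim that a rigid effective divisor on a K3 surface is supported on smooth rational curves. I would argue componentwise: write $\Gamma=\sum_i a_iC_i$ with the $C_i$ irreducible. If some component had $C_i^2\geq0$, then Riemann-Roch on the K3 surface gives $h^0(C_i)\geq\chi(\OO(C_i))=2+\frac12 C_i^2\geq2$, where $h^2(\OO(C_i))=h^0(-C_i)=0$ because $C_i$ is effective and non-zero. Hence $\dim|C_i|\geq1$, and replacing $C_i$ by a distinct member of its pencil moves $\Gamma$ inside $|\Gamma|$, contradicting rigidity. Therefore every $C_i^2<0$; as the intersection form is even this forces $C_i^2=-2$, and adjunction $2p_a(C_i)-2=C_i^2$ gives $p_a(C_i)=0$, so each $C_i$ is a smooth rational curve.

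Putting the two steps together, a lower stratum yields a rigid effective divisor on $X$ whose components are $(-2)$-curves, so $X$ contains a rational curve; contrapositively, if $X$ has no rational curves then $\Delta$ contains no class of positive slope and there are no lower strata.

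I expect the only real friction to be bookkeeping rather than substance: one must be sure that the support of $\Phi^2(M)$ is genuinely a codimension-one effective rigid divisor, and that it descends to such a divisor on $X$ under $\phi$ and the symmetry of the transform, rather than being merely a rigid one-dimensional locus on $Y$. Once that is pinned down, the K3 dichotomy between $C_i^2\geq0$ (the component moves) and $C_i^2=-2$ (the component is rational) closes the argument at once.
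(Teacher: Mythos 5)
Your proof is correct and takes essentially the same route as the paper, which likewise notes that a nontrivial $M\in\Delta$ arising from a lower stratum satisfies $\Phi$-WIT$_2$ with transform supported on a rigid divisor, transports this by symmetry to a rigid divisor on $X$, and concludes. The only difference is that you spell out the standard K3 fact (via Riemann-Roch and adjunction) that a rigid effective divisor must consist of smooth rational $(-2)$-curves, a step the paper leaves implicit.
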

In particular, if $M$ is special for some $E_s$ then it is special for all $E_s$ (and so
generically special for some $E_s$).
\begin{lemma}\label{l:noexist}
If $M$ is a line bundle such that $a<\mu(M)<\mu(L)-a$ then $M$ and $LM^*$ satisfy
$\Phi-$IT$_1$ and $\ell\cdot m-m^2>z$. Moreover, $L$ and $M$ are independent in the
Neron-Severi group. 
\end{lemma}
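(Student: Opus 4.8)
The plan is to read the $\Phi$-IT$_1$ property off as the vanishing of two $\Ext$-groups, deduce these from $\mu$-stability together with the maximality defining $a$, and then extract the numerical inequality and the independence statement from Riemann--Roch.

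Since $\Phi$ is the transform with kernel $\EE^*$, the fibre of $\Phi^i(M)$ at $s\in S$ is $\Ext^i(E_s,M)=H^i(M\otimes E_s^*)$. Thus to prove that $M$ satisfies $\Phi$-IT$_1$ it is enough to show that $\Hom(E_s,M)=0$ and that $\Hom(M,E_s)=0$ (which, by Serre duality on $X$ where $K_X=\OO$, is $\Ext^2(E_s,M)^*$) for \emph{all} $s$; the surviving cohomology $\Phi^1(M)$ then has constant fibre dimension and so is locally free. Because the slope window $a<\mu(M)<\mu(L)-a$ is symmetric under $M\mapsto LM^*$ (as $\mu(LM^*)=\mu(L)-\mu(M)$), it suffices to treat $M$, and the same argument applies verbatim to $LM^*$.

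The core step is the two vanishings, and it rests on the facts---established above---that every $E_s$ is $\mu$-stable and that $\mu(\Delta)=[0,a]$. A nonzero map $M\to E_s$ is injective (a line bundle into a torsion-free sheaf), and the saturation of its image is a line bundle $M'$ with $\mu(M')\geq\mu(M)>a$ and torsion-free quotient; $\mu$-stability gives $\mu(M')<\tfrac12\mu(L)$, so $M'\in\Delta$, contradicting $\mu(M')>a=\max\mu(\Delta)$. Dually, a nonzero map $E_s\to M$ has a rank-one kernel whose saturation $K'\hookrightarrow E_s$ satisfies $\mu(K')\geq\mu(L)-\mu(M)>a$ (using $\mu(M)<\mu(L)-a$) and again lands in $\Delta$ beyond $a$---impossible. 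The delicate point, and the main obstacle, is precisely this slope bookkeeping: one must check that the saturations have nonnegative slope and torsion-free quotient so that they genuinely lie in $\Delta$, and that stability is \emph{strict} so that the boundary value $\tfrac12\mu(L)$ is excluded.

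For the numerical statement, note that $M\neq0$ and $\Phi$ an equivalence force $\Phi^1(M)\neq0$, so $\rk\Phi^1(M)=-\chi(E_s,M)>0$. A Mukai-vector computation, using $\chi(E_s,E_s)=0$ to get $v(E_s)=(2,\ell,\tfrac{\ell^2}{4})$ and hence $z=\chi(E_s)=\tfrac{\ell^2}{4}+2$, yields $\chi(M,E_s)=z-(\ell\cdot m-m^2)$; positivity of the rank then gives $\ell\cdot m-m^2>z$, and the identical expression holds for $LM^*$ since $\ell\cdot c_1(LM^*)-c_1(LM^*)^2=\ell\cdot m-m^2$. Finally, if $L$ and $M$ were dependent in the (torsion-free) Neron--Severi group then $m=\lambda\ell$ for some rational $\lambda$, whence $\ell\cdot m-m^2=\lambda(1-\lambda)\ell^2\leq\tfrac14\ell^2<\tfrac14\ell^2+2=z$, contradicting $\ell\cdot m-m^2>z$; therefore $L$ and $M$ are independent.
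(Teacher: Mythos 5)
Your proof is correct and follows essentially the same route as the paper's: both establish $\Hom(E_s,M)=0=\Hom(M,E_s)$ for all $s$ (your saturation/maximality-of-$a$ argument makes explicit what the paper leaves implicit from its stratification discussion), deduce $\Phi$-IT$_1$ and $\chi(E_s,M)<0$ to obtain $\ell\cdot m-m^2>z$, and then rule out proportionality of $m$ and $\ell$ by showing it would violate that inequality. Your rational-scalar bound $\lambda(1-\lambda)\ell^2\leq\tfrac14\ell^2<z$ is simply a cleaner packaging of the paper's integer manipulation ending in $m^2<0$.
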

\begin{proof}
Suppose that $M$ is such a line
bundle with $a<m\cdot h<\ell\cdot h-a$. Then $\Hom(M,E_s)=0=\Hom(E_s,M)$ for all $s\in S$ and so
$\Phi^2(M)=0=\Phi^0(M)$. It also follows that $H^2(M)=0$ and $\Phi^1(M)$ is
locally-free. Then $\chi(ME_s^*)<0$ and this gives $\ell\cdot m-m^2>z$. 

Now suppose $\alpha m=\beta\ell$ for some integers $\alpha$ and $\beta$. Then $4\beta^2
z=\beta^2\ell^2+8\beta^2=\alpha^2 m^2+8\beta^2<4\beta^2(\ell\cdot m-m^2)=4\alpha\beta
m^2-4\beta^2m^2$. Then $8\beta^2<-(\alpha-2\beta)^2m^2$. So $m^2<0$. But $\ell^2\geq0$, a contradiction.
\end{proof}
\begin{prop}\label{p:pic}
If  $\ell$ is ample then $\ell$ cannot be written as a multiple of another ample divisor.
\end{prop}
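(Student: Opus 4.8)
The plan is to argue by contradiction, so suppose $\ell$ is ample and $\ell = km$ for some integer $k \geq 2$ and some ample class $m$; write $M$ for the line bundle with $c_1(M) = m$. The first observation is that $M$ and $L$ are then \emph{dependent} in the N\'eron--Severi group, so by the last assertion of \lemref{l:noexist} the slope $\mu(M)$ cannot lie in the open range $(a, \mu(L) - a)$. Since $m$ is ample and $\ell = km$ we have $\mu(M) = \tfrac1k \mu(L) \le \tfrac12\mu(L)$, and because $a < \tfrac12\mu(L)$ the upper bound $\mu(M) < \mu(L) - a$ holds automatically. Hence the only possibility left open is $\mu(M) \le a$. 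In particular the case $k = 2$ is settled immediately: then $\mu(M) = \tfrac12\mu(L) > a$, which already contradicts \lemref{l:noexist}.

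The next step is a numerical comparison that pins down which stratum $M$ could belong to. Using $\ell^2 = 4z - 8$ (from $\chi(E_s, E_s) = 0$) together with $\ell = km$ one computes $\ell\cdot m - m^2 = (k-1)m^2 = (k-1)(4z-8)/k^2$, and an elementary manipulation shows this is strictly less than $z$ for every $k \geq 2$. This is decisive, because the quantity $\ell\cdot m - m^2$ equals $z$ exactly for the genuine destabilising sub-line-bundles (the nonzero elements of $\Delta$, for which $m\cdot(\ell-m) = z$) and is strictly larger than $z$ for the classes of middle slope treated in \lemref{l:noexist}. Since for $M$ it is strictly smaller than $z$, the bundle $M$ lies in neither regime; equivalently $\chi(M, E_s) = z - (\ell\cdot m - m^2) > 0$, and as $\Ext^2(M, E_s) = \Hom(E_s, M)^* = 0$ by stability, $M$ injects into every $E_s$ but only with a torsion quotient. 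Thus $M \notin \Delta$, and we are forced into the lower-stratum situation $\mu(M) \le a$.

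It remains to rule out this last case, and I expect it to be the main obstacle. Here $X$ carries rational curves: by the discussion preceding \lemref{l:noexist} every nonzero $M_0 \in \Delta$ satisfies $\Phi$-WIT$_2$ with transform supported on a rigid divisor, and the defining relation $\ell\cdot m_0 - m_0^2 = z$ is equivalent to $(2m_0 - \ell)^2 = -8$. The plan is to choose $M_0 \in \Delta$ of maximal slope $a \ge \mu(M)$ and to play the class produced from $M_0$ against the ample class $\ell = km$: applying the Hodge index theorem to the rank-two sublattice $\langle m, m_0\rangle$ and using $m^2 = (4z-8)/k^2 > 0$ should force an effective class of negative self-intersection whose existence must be shown incompatible with $\ell$ being a \emph{proper} multiple of the ample class $m$. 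The delicate point is that this incompatibility is not purely lattice-theoretic --- such $(-2)$-type classes can be arithmetically consistent --- so the argument must feed in the fact that $\E$ genuinely defines an equivalence (for instance via $\Phi^{-1}\Phi(M_0) \cong M_0$ and the rigidity of the supporting divisor). It is this interaction between the rigid divisors coming from the lower strata and the equivalence property that I expect to be the crux of the proof.
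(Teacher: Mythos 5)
Your overall strategy is the paper's own: the paper's entire proof reads ``Set $\ell=nh$ (by choice of $h$) and $m=h$ and then apply the previous lemma,'' i.e.\ exactly your move of playing the N\'eron--Severi independence clause of \lemref{l:noexist} against the relation $\ell=km$. Your verification of the automatic upper bound $\mu(M)<\mu(L)-a$ and your complete treatment of $k=2$ are correct. But the proposal does not prove the proposition: the case $k\geq3$ with $\mu(M)\leq a$ is left open, and by your own admission only a plan is offered there. Moreover, your own computation shows this case cannot be sidestepped: for $k\geq3$ one has $\chi(M,E_s)=z-(k-1)m^2=\bigl[z(k-2)^2+8(k-1)\bigr]/k^2>0$, while $\Ext^2(M,E_s)\cong\Hom(E_s,M)^*=0$ by $\mu$-stability (here $\mu(M)<\frac12\mu(L)$ strictly), so $\Hom(M,E_s)\neq0$ for every stable $E_s$; saturating the image of an injection $M\hookrightarrow E_s$ produces an element of $\Delta$ of slope at least $\mu(M)$, whence $a\geq\mu(M)$. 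Thus for $k\geq3$ the hypothesis of \lemref{l:noexist} provably fails for $M$, so no rearrangement can make that lemma apply to $M$ itself; a genuinely different argument is needed and is not supplied. (Your analysis in fact exposes that the paper's one-line proof silently assumes $a<h^2$; that assumption is checkable when $k=2$, and in Picard rank one, where $(2m_0-\ell)^2=-8$ has no solution $m_0=ch$, so $\Delta=\{\OO\}$ and $a=0$, but not in general.)

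Your proposed repair also cannot close the gap in the form stated. For nonzero $M_0\in\Delta$ the relation $m_0\cdot(\ell-m_0)=z$ is, as you say, $(2m_0-\ell)^2=-8$; but combining it with $\ell=km$ and $\ell^2=4z-8$, the Hodge index inequality $(m_0\cdot m)^2\geq m_0^2m^2$ reduces to $(2\,m_0\cdot m-km^2)^2+8m^2\geq0$, which holds identically, so the lattice gives no constraint at all on where $m_0$ can sit. Concretely, $k=3$, $m^2=4$ (so $z=11$), $m_0\cdot m=5$, $m_0^2=4$ is a numerically consistent configuration with $0<\mu(M_0)=5<\frac12\mu(L)=6$ and $\mu(M)=4\leq a$. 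So, exactly as you suspected, ruling out the lower-stratum case must use the geometric consequences of the equivalence (the $\Phi$-WIT$_2$ property of $M_0$, the rigid one-dimensional support of $\Phi^2(M_0)$, and $\Phi^{-1}\Phi(M_0)\cong M_0$), and your proposal stops at identifying this crux rather than carrying it out. As it stands the attempt establishes the proposition only for $k=2$ (and, implicitly, for Picard rank one).
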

\begin{proof}
Set $\ell= nh$ (by choice of $h$) and $m=h$ and then apply the previous lemma.
\end{proof}

When the Picard rank is 1 then such line bundles $M_i$ cannot exist and there are unique rank 2
Fourier-Mukai transforms. This was proved by Hosono, Lian, Oguiso and Yau in \cite{HLOY1}. For
completeness we apply sheaf 
theoretic techniques to study this case. Observe first that \propref{p:pic} implies
that $\Pic(X)=\langle L\rangle$. It also
follows that $\rho(S)=1$. Note also that $4|\ell^2$. Let $\Pic(S)=\langle \hat L\rangle$. 
By symmetry, the restriction of $\EE$ to $S$ can also be written as extensions of $\hat
L\I_{\hat Z}$ by $\OO_S$.

\begin{prop}\label{p:pic1}
Let $X$ be a polarized K3 surface with $\Pic(X)=\langle L\rangle$. Then there is a Fourier-Mukai
transform $D^b(X)\to D^b(Y)$ with locally-free kernel of rank 2 if and only if $\ell^2\equiv
4\pmod 8$ and then each restriction $E_y$ to $X$ of the kernel sits in an extension of the form
\[0\lra L^k\lra E_y\lra L^{k+1}\I_Z\lra 0\]
for some integer $k$ and 0-dimensional scheme $Z$ of length $2n+3$, where $\ell^2=4(2n+1)$, for
some $n\geq0$. Moreover the Chern character of $\Phi_{\EE}(F)$ is given by
\begin{align*}
\ch_0&=(2n+3)r+c\ell^2+2t\\
\ch_1&=\bigl((n+1)r+c(4n+1) +t\bigr)\hat\ell\\
\ch_2&=2(n^2-1)r+(n-1)c\ell^2+(2n-1)t
\end{align*}
where $\ch(F)=(r,c\ell,t)$.
\end{prop}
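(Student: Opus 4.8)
The plan is to prove both implications by reducing everything to the Mukai vector $v=(2,\ell,\ell^2/4)$ of a normalized restriction and to its divisibility in the algebraic Mukai lattice. First I would fix the shape of the restrictions. Since $\Pic(X)=\langle L\rangle$ with $L$ ample there are no classes of square $-2$ and hence no rational curves on $X$, so by the no-lower-strata proposition every $E_y$ lies in the top stratum: after one global twist by a power of $L$ the trivial bundle $\OO=L^0$ is the maximal-slope sub-line bundle of every $E_y$, so $\Delta=\{\OO\}$ and $a=0$. Applying \lemref{l:noexist}, whose conclusion that $L$ and $M$ are independent is impossible when $\rho(X)=1$, shows there is no line bundle whose slope lies strictly between $0$ and $\mu(\det E_y)$; as the available slopes are the integer multiples of $\mu(L)$, this forces the normalized determinant to be exactly $L$, i.e. the gap is $1$. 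Untwisting gives the asserted extension $0\to L^k\to E_y\to L^{k+1}\I_Z\to0$, and $\chi(E_y,E_y)=0$ then yields $\ell^2=4\lvert Z\rvert-8$, so $\lvert Z\rvert=\ell^2/4+2$.

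For the forward implication it remains to pin the congruence. In the normalized bundle $v(E_y)=(2,\ell,\ell^2/4)$ is isotropic, and since $\ell$ is primitive it is automatically primitive in the full Mukai lattice; hence the distinction $\ell^2\equiv4$ versus $\ell^2\equiv0\pmod8$ must come from the algebraic lattice $\tilde H_{\mathrm{alg}}(X)=U\oplus\langle\ell^2\rangle$. A direct pairing computation shows that the image of $\langle v,-\rangle$ on $\tilde H_{\mathrm{alg}}(X)$ is $\gcd(2,\ell^2/4)\,\Z$, so the divisibility of $v$ equals $1$ exactly when $\ell^2/4$ is odd, i.e. when $\ell^2\equiv4\pmod8$. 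Because $\EE$ is a genuine universal sheaf, $Y$ is a fine moduli space of these sheaves, and the existence of a universal sheaf forces divisibility $1$; hence $\ell^2\equiv4\pmod8$. Writing $\ell^2=4(2n+1)$ with $n\geq0$ (forced by $\ell^2>0$) turns $\lvert Z\rvert=\ell^2/4+2$ into $\lvert Z\rvert=2n+3$.

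For the converse I would run this in reverse. Given $\ell^2\equiv4\pmod8$, the vector $v=(2,\ell,\ell^2/4)$ is primitive, isotropic and of divisibility $1$, so by Mukai's construction the moduli space $Y=\M(v)$ is a smooth projective K3 surface of dimension $v^2+2=2$ carrying a genuine universal sheaf $\EE$; the restrictions form a complete two-dimensional strongly simple family, so $\Phi_{\EE}$ is fully faithful by Bondal--Orlov, and since $v^2=0$ it is an equivalence. The one point needing checking is that $\EE$ is locally free: if some stable $F\in\M(v)$ were not locally free, its reflexive hull $F^{**}$ would again be $\mu$-stable with $v(F^{**})=(2,\ell,\ell^2/4+\lambda)$ for $\lambda=\operatorname{length}(F^{**}/F)\geq1$, giving $v(F^{**})^2=-4\lambda\leq-4<-2$, which is impossible for a stable sheaf on a K3. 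Hence every member is a rank $2$ vector bundle, and the symmetric argument on $Y$ together with flatness shows $\EE$ is locally free on $X\cross Y$.

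Finally the Chern character is a Grothendieck--Riemann--Roch bookkeeping. The K\"unneth components of $\ch(\EE)$ in $H^*(X)\tensor H^0(Y)$ and $H^0(X)\tensor H^*(Y)$ are read off from $\ch(E_y)=(2,\ell,2n-1)$ and the symmetric $\ch({}_xE)=(2,\hat\ell,2n-1)$, while the mixed part in $H^2(X)\tensor H^2(Y)$ and the higher mixed terms are fixed by demanding that the cohomological transform $\phi^H_{\EE}$ be an isometry, equivalently that $\Phi_{\EE}$ be an equivalence; substituting $\ch(F)=(r,c\ell,t)$ and $\ell^2=4(2n+1)$ into $\ch(\Phi F)=q_*\bigl(p^*\ch(F)\cdot\ch(\EE)\cdot p^*\mathrm{td}(X)\bigr)$ then gives the stated formulas, with $\Phi(\OO_x)={}_xE$ and $v(\Phi\OO_X)^2=-2$ serving as consistency checks. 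I expect the main obstacle to be the two sharpness points: proving that the gap equals $1$ (the no-lower-strata plus slope argument above) and isolating $\ell^2\equiv4\pmod8$ rather than merely $4\mid\ell^2$, which hinges on the divisibility of $v$ in the algebraic Mukai lattice; the local-freeness and the Chern-class arithmetic are then routine, modulo careful sign bookkeeping in the last step.
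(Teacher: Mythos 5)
Your handling of the structural part is correct and takes a genuinely different route from the paper: where the paper simply cites \cite{HLOY1} for the fact that $\ch(E_y)=(r,\ell,s)$ with $r,s$ coprime, you rederive the extension shape from the paper's own machinery (no rigid divisors when $\rho(X)=1$, hence no lower strata, plus \lemref{l:noexist} to force the slope gap to equal $1$), and you get the congruence from the divisibility of $v=(2,\ell,\ell^2/4)$ in the algebraic Mukai lattice; the reflexive-hull argument for local-freeness in the converse is also fine. Two small caveats: what is needed (and what holds at $\rho=1$) is the absence of rigid divisors, i.e.\ of smooth $(-2)$-curves, not of all rational curves; and your appeal to ``a universal sheaf forces divisibility $1$'' is an uncited black box, though in this setting it follows directly because the equivalence $\Phi$ induces an isometry of numerical Grothendieck groups carrying $v(E_y)$ to $v(\OO_y)=(0,0,1)$, which has divisibility $1$.

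The genuine gap is in your last paragraph, which is where the actual content of the Chern character formulas lies. You claim the mixed K\"unneth components of $\ch(\EE)$ are ``fixed by demanding that $\phi^H_{\EE}$ be an isometry, equivalently that $\Phi_{\EE}$ be an equivalence.'' They are not. Writing the unknowns as in the paper's proof, $\ch(\Phi^2\OO)=(z,c\hat\ell,\alpha)$, $\phi(\ell)=x\hat\ell$ and the integer $y$, the full list of constraints of the type you propose --- $\Phi^K(2,\ell,z-4)=(0,0,1)$, $\chi(\Phi\OO,\Phi\OO)=2$, $\rk\Phi^{-1}\Phi(\OO)=1$, compatibility with the Mukai pairing --- leaves exactly \emph{two} solutions: $c=-n-1$, $x=4n+1$ (with $\det\Phi^K=1$) and $c=-n-2$, $x=4n+3$ (with $\det\Phi^K=-1$). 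Both are isometries, so no amount of the ``sign bookkeeping'' you defer can separate them, and they yield different formulas (the coefficient of $c$ in $\ch_1$ is $4n+1$ versus $4n+3$). The paper eliminates the second case by a geometric argument you are missing: a surjection $\OO\to\OO_W$ transforms to a nonzero map $\widehat\OO\to\oplus E_s$, and in the second case $\mu(\widehat\OO)=\frac{n+2}{2n+3}\ell^2>\frac12\ell^2=\mu(E_s)$, which would force $\widehat\OO$ to be unstable; but $\widehat\OO$ is an exceptional bundle, and exceptional bundles on a Picard-rank-one K3 are stable (\cite{Z}). Without this step (or an equivalent one) your proposal cannot arrive at the stated coefficients, so the formulas in the proposition remain unproved.
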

\begin{proof}
The Picard rank 1 case is studied in \cite{HLOY1} and it is shown that $\ch(E_s)=(r,\ell,s)$
with $r$ and $s$ coprime. When $r=2$ we have that $\ell^2/4=z-2$ must be odd and so $z$ is also
odd. Then we can write $\ell^2=4(2n+1)$ and $z=2n+3$. It is also shown that $Y$ is a K3 surface
with Picard rank 1 and $\hat\ell^2=\ell^2$ for a generator $\hat\ell$ of $\Pic(Y)$.

  Recall that $\OO$
is $\Phi-$WIT$_2$. Let $\ch(\Phi^2(\OO))=(z,c\hat\ell,\alpha)$. Twisting by line bundles on $S$
we can assume that $\ch(\Phi(\OO_x))=(2,\hat\ell,z-4)$.  Then we
can express the linear map given by $\Phi$ induced on algebraic K-Theory of $X$ in the same
basis by the matrix 
\[\Phi^K=\begin{pmatrix} z&-\ell&2\\ c\hat\ell&\phi&-\hat\ell\\\alpha&y\ell&z-4 
  \end{pmatrix}\]
for some integer $y$ and linear map $\phi:\Pic(X)\to\Pic(Y)$. Let
$\phi(\ell)=x\hat\ell$. We can determine these unknowns
as follows.
Applying the matrix to $(2,\ell,z-4)$ we should get $(0,0,1)$ as $\Phi(E_s)=\OO_s$. But
$\Phi^K(2,\ell,z-4)=0,2c\hat\ell+x\hat \ell-(z-4)\hat\ell,2\alpha+4y(z-2)+(z-4)(z-4))$. The
middle term gives $x=z-4-2c$ and the last term gives $2\alpha+4y(z-2)+(z-4)(z-4)=1$. Using 
$\chi(\OO,\OO)=2=\chi(\Phi(\OO),\Phi(\OO))$ we have 
\begin{equation}2z\alpha-4c^2(z-2)+2z^2=2.\label{eq:OO}
\end{equation}
We also have
$1=\rk(\Phi^{-1}(\Phi(\OO))=\chi((2,\hat\ell,z-4)(z,c\hat\ell,\alpha))=2\alpha+4c(z-2)+z(z-4)+4z$.
Multiplying this by $z$ and subtracting the previous equation we obtain
\[4c(z+n)(z-2)=(z-2)(1-z^2).\]
Rearrange to get $(z+2c)^2=1$. Then $z=1-2c$ or $z=-1-2c$. In either case, substituting into
(\ref{eq:OO}) we find $\alpha=2c(2+c)$ and the other equation gives $y=c+2$.

In the former case $c=-n-1$ and $x=4n+1$ and in the latter $c=-n-2$ and $x=4n+3$. These give
$|\Phi^K|=1$ and $|\Phi^K|=-1$ respectively. Only one such transform can occur as their
composition would be the zero functor (because $\Ext^*(E_y,E'_y)=0$, where $\EE$ and $\EE'$
are the two respective kernels). We will show that it must be the former. To see this observe 
that $\OO$ is 
$\Phi^{-1}-$IT$_0$ and a surjection $\OO\to \OO_W$ for any 0-dimensional subscheme $W$ gives a
non-zero map $\widehat\OO\to\oplus E_s$. In the second case
$\mu(\widehat\OO)=\dfrac{n+2}{2n+3}\ell^2>\mu(E_s)=\dfrac12\ell^2$ and so 
$\widehat\OO$ would be unstable. But $\widehat\OO$ is an exceptional vector bundle and these are
always stable when the Picard rank is 1 (see \cite[Theorem]{Z}).
\end{proof}
\begin{remark}
In \cite{HLOY1} it is also shown that $X\cong Y$ exactly when $n=0$.
\end{remark}
\begin{remark}
We can say rather more. Since $E_s$ move in a strongly simple family and $\Ext^1(E,\OO)=0$ we
have that $E_s$ 
determine a unique family of 0-dimensional subschemes $Z$ which must cut out the cohomology
jumping locus $J$, say, in $\Hilb^zX$ parametrizing sheaves of the form $L\I_Z$. Then $\dim
J=h^0(E_s)-1+2=2n+4$. This is codimension $2n+2$ in $\Hilb^zX$. The Cayley-Bacharach condition
implies that any colength 1 subscheme of $Z\in J$ is not in the cohomology jumping locus of
$\Hilb^{z-1}X$. 
\end{remark}

\section{Reflexive K3 surfaces}
\begin{dfn} Following \cite{BBH} we define a {\em reflexive}
K3 surface to be a K3 surface which admits two line bundles $H$ and $L$ such
that $H$ is ample (and regarded as a polarization of $X$) and
$$H^2=2,\qquad L^2=-12\qquad H\cdot L=0.$$
Let $h=c_1(H)$ and $\ell=c_1(L)$.
We say that the reflexive K3 surface is {\em degenerate} if $H^0(LH^2)\neq0$
and {\em non-degenerate} otherwise.
\end{dfn}
Note that $\chi(LH^2)=0$.
Since $h$ is ample and $(\ell+2h)\cdot h=4$ we have that $LH^2$ has no
cohomology on a non-degenerate reflexive K3 surface. Note also that if we set
$\hat L=L^5H^{12}$ and $\hat H=L^2H^5$ then $\hat H$ and $\hat
L$ satisfy the same conditions as $H$ and $L$. One can show that $\hat H$ is
ample if $X$ is non-degenerate using the methods of
\lemref{l:d1d2} below or indirectly using fact that $\hat H^2$ is the
determinant line bundle (see \cite{BBH2}). Conversely, if $X$ is degenerate
then that lemma
implies that $\ell+2h=d_1+d_2$, where $d_1$ and $d_2$ are effective of
self-intersection $-2$ with $d_1\cdot d_2=0$. Then $2\ell+5h=2d_1+2d_2+h$. But
$h\cdot d_1\leq3$ and so $(2\ell+5h)\cdot d_1<0$ and so $\hat H$ is not ample.

\thmref{t:suffic} immediately implies the first part of the following.
\begin{thm}[{\rm see} \cite{BBH}]\label{t:nondegFM}
If $X$ is a non-degenerate reflexive K3 surface then 
$$0\lra H^{-1}\boxtimes L^3H^7\lra\E\lra(LH\boxtimes L^2H^5)\I_\Delta\lra0$$
gives rise to a Fourier-Mukai transform $\Phi$
and the Chern character of $\Phi(F)$ is given by
\begin{align*}
\ch_0&=-r+f\cdot\ell+2t\\
\ch_1&=-f+f\cdot(\ell+2h)\hat h+(f\cdot h-t)\hat\ell\\
\ch_2&=-2f\cdot\ell-5t
\end{align*}
where $\ch(F)=(r,f,t)$. Moreover, $\Phi(\OO)=\OO[-1]$.
\end{thm}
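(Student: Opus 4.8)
The plan is to handle the three assertions of the theorem in turn: the existence of $\Phi$, the Chern-character formula, and the identification $\Phi(\OO)=\OO[-1]$.

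First, the existence of $\Phi$ is a direct application of \thmref{t:suffic} with $A=H^{-1}$, $B=L^3H^7$, $C=LH$ and $D=L^2H^5$. Condition (ii) there, $AB=CD$, holds since both sides equal $L^3H^6$. For condition (i), note $AC^*=(LH^2)^{-1}$; as $LH^2$ has no cohomology on a non-degenerate reflexive surface (recorded just above), Serre duality gives $h^j(AC^*)=0$ for all $j$. Thus $\E$ is a genuine Fourier--Mukai kernel, which is the first assertion.

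Second, for the Chern character I would feed $a=-h$, $b=3\ell+7h$, $c=\ell+h$, $d=2\ell+5h$ into the formula of \propref{p:necconds} and simplify with the reflexive relations $h^2=2$, $\ell^2=-12$, $h\cdot\ell=0$. The one point worth isolating is that the coefficients of $r$ in $\ch_1$ and $\ch_2$ collapse: the brackets $(a^2+4)b+(c^2+2)d-2c$ and $a^2b^2+4b^2+c^2d^2-2c^2+2d^2-4c\cdot d$ both vanish on substitution. The surviving terms are then rewritten in the target basis via $\hat h=c_1(L^2H^5)=d$ and $\hat\ell=c_1(L^5H^{12})$, using $b=\hat\ell-\hat h$, to recover the displayed expressions. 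This step is entirely mechanical.

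Third, and this is the real content, I would obtain $\Phi(\OO)=\OO[-1]$ by showing that $\OO$ satisfies $\Phi$-IT$_{1}$. Taking $(r,f,t)=(1,0,0)$ in the formula just proved gives $\ch(\Phi\OO)=(-1,0,0)$, so it suffices to prove that $\mathbf{R}\Gamma(X,E_y)$ is concentrated in degree $1$ with one-dimensional $H^1$ for every $y$; then $\Phi\OO$ is a shift $\mathcal M[-1]$ of a line bundle $\mathcal M$, and $\ch(\mathcal M)=(1,0,0)$ together with the torsion-freeness of $\Pic X$ forces $\mathcal M=\OO$. Each $E_y$ sits in $0\to H^{-1}\to E_y\to LH\I_y\to 0$ with $\det E_y=L$, so $c_1(E_y)=\ell$ and, by Riemann--Roch, $c_2(E_y)=-1$ and $\chi(E_y)=-1$. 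The vanishing $H^0(E_y)=0$ is immediate, since $H^0(H^{-1})=0$ and $H^0(LH\I_y)\subseteq H^0(LH)=0$, the latter because an effective $LH$ would make $LH^2=LH\otimes H$ effective, contradicting non-degeneracy. Hence the crux is $H^2(E_y)=0$, and this is where I expect the main difficulty.

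By Serre duality $H^2(E_y)\cong\Hom(E_y,\OO)^*$, and since $E_y^*\cong E_y\otimes L^{-1}$ this equals $\Hom(L,E_y)$; so the claim is that $L$ does not embed in $E_y$. The plan is to rule this out through $\mu$-semistability: as $\ell\cdot h=0$ one has $\mu(E_y)=0$, and any sub-line-bundle $N\hookrightarrow E_y$ must map nonzero to $LH\I_y$ (otherwise $N\hookrightarrow H^{-1}$ has negative slope), which bounds $0<c_1(N)\cdot h\le 2$, and in fact $<2$ because a section of $LH\otimes N^{-1}$ must vanish at $y$. A saturated copy of $L$ is excluded outright, since the resulting quotient would be $\I_W$ with $\operatorname{length}(W)=c_2(E_y)=-1<0$; the genuine obstacle is therefore to eliminate sub-line-bundles of intermediate slope (equivalently a non-saturated $L\hookrightarrow E_y$), which I would control using the intersection form on the reflexive lattice $\langle H,L\rangle$ together with the non-degeneracy hypothesis, which pins down the extension class of $E_y$. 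Once $H^2(E_y)=0$ is established, $h^1(E_y)=-\chi(E_y)=1$ is automatic and the identification $\Phi(\OO)=\OO[-1]$ follows.
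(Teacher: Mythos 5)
Your handling of the first two assertions is correct and coincides with the paper's: existence is \thmref{t:suffic} applied with $(A,B,C,D)=(H^{-1},L^3H^7,LH,L^2H^5)$, and the character formula is substitution into \propref{p:necconds}. One caveat on the step you call ``entirely mechanical'': with your dictionary $\hat h=c_1(L^2H^5)$, $\hat\ell=c_1(L^5H^{12})$, the substitution actually yields $\ch_1=-f+f\cdot(\ell+2h)\hat h+(t-f\cdot h)\hat\ell$, i.e.\ the opposite sign on the $\hat\ell$-term from the display. The paper is implicitly identifying $\hat\ell$ with $-(5\ell+12h)$ under $Y\cong X$ (this is forced by its remark that ${}_yE$ has Mukai vector $(2,-\hat\ell,-3)$, whereas the defining extension gives $c_1({}_yE)=b+d=5\ell+12h$), so under your stated identification you would not literally recover the displayed formula. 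This is a convention mismatch, not a substantive error, but you should flag it rather than assert agreement.

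The genuine gap is in the third assertion, and you have located it yourself without closing it. Your reduction is sound: $H^0(E_y)=0$, the identification $H^2(E_y)^*\cong\Hom(L,E_y)$ via $E_y^*\cong E_y\otimes L^{-1}$, the exclusion of a saturated $L\hookrightarrow E_y$ by $c_2(E_y)=-1<0$, and the exclusion of saturations of slope $2$ are all fine. What remains is a nonzero map $L\to E_y$ whose saturation is $L(\Gamma)$ with $\Gamma$ effective, $\Gamma\cdot h=1$ and $H(-\Gamma)$ effective, and the tool you propose --- ``the intersection form on the reflexive lattice $\langle H,L\rangle$'' --- cannot touch this case: every class in $\langle h,\ell\rangle$ has even intersection with $h$, so such a $\Gamma$ exists only when $\Pic X$ is strictly larger than the reflexive lattice. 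In other words, your sketch is complete precisely when $\Pic X=\langle H,L\rangle$, but the theorem is asserted for every non-degenerate reflexive K3 surface, and nothing in the definition excludes, say, $h=\Gamma+\Gamma'$ with $\Gamma,\Gamma'$ irreducible of degree $1$. In that situation one must show that cup product with the extension class of $E_y$ (which is indeed unique up to scale, since non-degeneracy gives $\dim\Ext^1(LH\I_y,H^{-1})=1$) is injective on the one-dimensional space $H^0(H(-\Gamma)\I_y)$ for every $y\in\Gamma'$; ``non-degeneracy pins down the extension class'' does not by itself produce this vanishing, which is exactly the stability-type statement proved with real work in \cite{BBH}. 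For comparison, the paper's route is global rather than fibrewise: applying $\R q_*$ to the defining extension, $\R q_*(H^{-1}\boxtimes L^3H^7)\cong H^2(H^{-1})\otimes L^3H^7[-2]$ and $\R q_*((LH\boxtimes L^2H^5)\I_\Delta)$ is concentrated in degree $1$, whence $R^0\Phi\OO=0$ and $\Phi\OO$ is governed by the resulting connecting map; but surjectivity of that map is equivalent to $R^2\Phi\OO=0$, i.e.\ to your $H^2(E_y)=0$ for all $y$, so either route stands or falls with the vanishing your proposal leaves open.
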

The last part of the theorem follows easily
from \propref{p:necconds} and by applying
$\R q_*$ to the defining extension of $\E$.
\begin{remark}
One can also prove that the collection $\{E_x\}$ of simple sheaves is actually
a moduli space of $\mu$-stable bundles with Mukai vector $(2,\ell,-3)$
(see \cite{BBH}).
Moreover, this is naturally polarized by $\hat H$.
The set of restrictions to points in the first factor $\{{}_yE\}$
is also a collection of $\mu$-stable vector bundles
with respect to $\hat H$ with Mukai vector $(2,-\hat\ell,-3)$.
\end{remark}

\section{Degenerate reflexive K3 surfaces}
We now consider the case when $X$ is a degenerate K3 surface. We shall first
state a technical result concerning the geometry of the effective divisor
$\ell+2h$ the proof of which can be found in the appendix.
\begin{lemma}
The divisor $\ell+2h$ is a sum of rational curves. Moreover, there are two
effective divisors $d_1$ and $d_2$ such that $\ell+2h=d_1+d_2$ with
$d_1\cdot d_2=0$, $d_1^2=-2=d_2^2$ and neither $d_1-d_2$ nor
$d_2-d_1$ are effective.
\end{lemma}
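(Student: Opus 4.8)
The plan is to work throughout with the effective divisor class $D=\ell+2h$, whose effectivity is exactly the degeneracy hypothesis $H^0(LH^2)\neq0$. First I would record its numerical invariants from $h^2=2$, $\ell^2=-12$, $h\cdot\ell=0$: namely $D^2=-4$, $D\cdot h=4$ and $D\cdot\ell=-12$, and by Riemann--Roch $\chi(D)=2+\tfrac12 D^2=0$. Since $D$ is a nonzero effective class, $-D$ is not effective, so Serre duality gives $h^2(D)=h^0(-D)=0$ and hence $h^0(D)=h^1(D)\geq1$. These relations, and especially the strong negativity $D\cdot\ell=-12$ against the near-trivial $D\cdot h=4$, are the arithmetic leverage I would exploit.

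The first assertion is that $D$ is a sum of rational curves, i.e.\ that in an effective representative $D=\sum_i a_iC_i$ (distinct irreducible $C_i$, $a_i\geq1$) every component is a smooth rational $(-2)$-curve. By adjunction each $C_i$ has $C_i^2\geq-2$, with equality precisely when $C_i\cong\PP^1$, and any component with $C_i^2\geq0$ is nef. Since distinct irreducible curves meet nonnegatively, if no $(-2)$-curve occurred we would have $D^2=\sum a_i^2C_i^2+2\sum_{i<j}a_ia_jC_i\cdot C_j\geq0$, contradicting $D^2=-4$; so $(-2)$-curves are present. To eliminate the remaining nef components I would pass to the Zariski decomposition $D=P+N$, with $P$ nef, $N$ effective and supported on $(-2)$-curves, and $P\cdot N=0$. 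If $P\cdot h=0$ then $P$ is nef of zero degree against the ample $h$, hence numerically trivial, so $P=0$; the work is to exclude $P\cdot h>0$. Here I expect to combine the Hodge index bound $2P^2\leq(P\cdot h)^2$ with the identity $P^2=D\cdot P=P\cdot\ell+2\,P\cdot h$, the negativity $D\cdot\ell=-12$, and the rigidity $h^0(D)=h^1(D)$ to force $P=0$. Once $P=0$, the class $D$ equals its negative part, is supported entirely on $(-2)$-curves whose intersection form is negative definite, and is therefore a sum of rational curves.

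For the splitting I would reduce to a single clean task: it suffices to produce an effective $(-2)$-class $d_1$ with $D-d_1$ effective and $D\cdot d_1=-2$. Indeed, setting $d_2=D-d_1$ one computes $d_2^2=D^2-2D\cdot d_1+d_1^2=-4+4-2=-2$ and $d_1\cdot d_2=D\cdot d_1-d_1^2=-2+2=0$ automatically, so the two orthogonality and self-intersection conditions come for free. To find $d_1$, note $D^2=\sum_i a_i(C_i\cdot D)=-4<0$ forces some $(-2)$-curve in the support with negative intersection against $D$; using negative-definiteness of the support together with the fact that $D\cdot h=4$ with each component of $h$-degree at least $1$ leaves only finitely many (indeed at most four components counted with multiplicity) configurations, I would extract a subconfiguration $d_1\leq D$ with $d_1^2=-2$ and $D\cdot d_1=-2$ by a short enumeration. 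Finally, the non-effectivity of $d_1-d_2$ and $d_2-d_1$ I would read off from $h$-degrees: in the balanced split $d_1\cdot h=d_2\cdot h=2$ both differences have $h$-degree $0$, so an effective such difference would vanish, forcing $d_1=d_2$ and contradicting $d_1\cdot d_2=0\neq-2=d_1^2$.

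I expect the main obstacle to lie in the middle two steps rather than the bookkeeping: first, rigorously excluding a nontrivial nef part $P$ (so that ``sum of rational curves'' genuinely holds and is not merely ``$D$ has a rational piece''), and second, guaranteeing that the extracted $d_1$ can be chosen so that the \emph{balanced} degree split $\{2,2\}$ is available. The delicate case is an unbalanced split $\{1,3\}$, where $d_2-d_1$ has positive $h$-degree and could a priori be effective; I would handle this either by choosing $d_1$ of minimal $h$-degree and regrouping the $(-2)$-curves, or by ruling the unbalanced configurations out directly from the negative-definite intersection pattern on the (very small) support.
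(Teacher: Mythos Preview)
Your plan is a genuine alternative to the paper's argument: the paper never invokes a Zariski decomposition but instead examines each irreducible component of $\ell+2h$ directly, using the Hodge index theorem against the degree bound $(\ell+2h)\cdot h=4$ to rule out components with $d_1^2\in\{2,4\}$, then separate geometric arguments about elliptic fibrations to rule out $d_1^2=0$, and finally an enumeration by the number $n\in\{2,3,4\}$ of components. Your reduction ``find an effective $d_1\leq D$ with $d_1^2=-2$ and $D\cdot d_1=-2$'' is clean and the automatic verification $d_2^2=-2$, $d_1\cdot d_2=0$ is a nice way to organise the second half.

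However, the step ``force $P=0$'' has a real gap, and it is exactly where you suspect. The ingredients you list --- the Hodge bound $2P^2\le (P\cdot h)^2$, the identity $P^2=P\cdot\ell+2P\cdot h$, the value $D\cdot\ell=-12$, and $h^0(D)=h^1(D)$ --- do \emph{not} exclude a nontrivial nef part with $P^2=0$. Concretely, the configuration $P\cdot h=2$, $P^2=0$, $P\cdot\ell=-4$, together with $N=C_1+C_2$ two disjoint $(-2)$-curves of $h$-degree $1$ and $C_i\cdot\ell=-4$, satisfies every numerical constraint you wrote down (including $D^2=-4$, $D\cdot h=4$, $D\cdot\ell=-12$, and $h^0(D)=h^0(P)=2=h^1(D)$). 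So purely lattice/cohomological bookkeeping will not close this; you need a geometric input to exclude an elliptic pencil inside $|D|$. The paper supplies this via explicit fibration arguments (e.g.\ playing a would-be elliptic class of $h$-degree $1$ off against a section, and a further reduction when the degree is $2$). Your sketch does not yet contain an analogue of this step.

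There is also a second issue in your endgame. The unbalanced split $\{1,3\}$ is not a pathology to be ``regrouped away'': it genuinely occurs (the paper's type~II case), and once $d_1^2=d_2^2=-2$ with $d_1\cdot d_2=0$ are fixed, there is no alternative regrouping to a $\{2,2\}$ split. So your non-effectivity argument via $h$-degree $0$ does not cover all cases. A workable replacement is to exploit rigidity of the $(-2)$-class on the larger side: if, say, $d_2-d_1$ were effective and $d_2$ is a rigid effective $(-2)$-class with $h^0(d_2)=1$, then $d_1+(d_2-d_1)\in|d_2|$ forces $d_1\le d_2$ as divisors, which is incompatible with $d_1\cdot d_2=0$ once $d_1$ is not a component of $d_2$. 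This needs a short check that the relevant $h^0$ is indeed $1$, but it is the kind of argument that will close the unbalanced case.
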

Let $L$ and $H$ denote the line bundles $\OO(\ell)$ and $\OO(h)$,
respectively. The degrees of divisors will be taken with respect to $h$.
Note that $(\ell+2h)^2=-4$. So $\chi(LH^2)=0$. We also have $\deg(\ell+2h)=4$.
\begin{remark}
If $\deg(d_1)=1$ then $\deg(d_2)=3$ and we must have $h=d_1+e$, where $e$ is a
rational curve of degree 1 with $d_1\cdot e=3$. This is because
$\chi(D_1H^*)=1$ and so $h-d_1$ is effective. Since $(d_1+e)\cdot d_1=1$ we
have $d_1\cdot e=3$ and $(d_1+e)^2=2$ implies that $e^2=-2$ and $h\cdot e=1$.
We shall call such a degenerate reflexive K3 surface a {\it type II\/} surface
and the case when $\deg(d_1)=2$, a {\it type I\/} surface.
\end{remark}
We can now use this lemma to establish the existence of Fourier-Mukai
transforms for both types of degenerate reflexive K3 surfaces.
\begin{thm}\label{t:degFM}
Suppose that $X$ is a type I K3 surface.
Write $\ell+2h=d_1+d_2$ using \lemref{l:d1d2}.
Then there is a moduli space of simple bundles with Mukai vector 
$(2,\ell,-3)$ which is isomorphic to $X$ with deformation sheaf
$\E$ over $X\cross X$ given by the extension
$$0\lra D_1H^*\boxtimes D^*_1H\lra\E\lra(D_2H^*\boxtimes
D^*_2H)\I_\Delta\lra0.$$
This sheaf gives rise to a Fourier-Mukai transform 
normalized by $\Phi(\OO)=\OO[-1]$. 
The Chern character of $\Phi(F)$ is given by
\begin{align*}
\ch_0&=-r+f\cdot\ell+2t\\
\ch_1&=-f-t\ell-(f\cdot h)(\ell+2h)+(f\cdot\ell)\ell
-(f\cdot d_1)d_1-(f\cdot d_2)d_2\\
\ch_2&=-2f\cdot\ell-5t,
\end{align*}
where $\ch(F)=(r,f,t)$.
If $X$ is type II surface then $\E$ exists as above but is given by the
extension 
$$0\lra D_1H^*\boxtimes D_2D_1^{-2}H\lra\E\lra(D_2H^*\boxtimes
D_1^*H)\I_\Delta\lra0,$$
where $\deg(D_1)=1$.
This also gives rise to Fourier-Mukai transform and the Chern character of the
transform of a sheaf $F$ is
\begin{align*}
\ch_0&=-r+f\cdot\ell+2t\\
\ch_1&=-f+(f\cdot\ell)h+(f\cdot d_1)d_2-(f\cdot(2d_1+d_2))d_1+t(d_2-3d_1+2h)\\
\ch_2&=-2f\cdot\ell-5t.
\end{align*}
The transform is normalized by $\det\Phi(\OO)=\OO$ but $\OO$ does not
satisfy $\Phi$-WIT.
\end{thm}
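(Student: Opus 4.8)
The plan is to deduce both cases from \thmref{t:suffic} by reading off the line bundles $A,B,C,D$ from each defining sequence and checking its two hypotheses. In the type~I case these are $A=D_1H^*$, $B=D_1^*H$, $C=D_2H^*$, $D=D_2^*H$, and in the type~II case $A=D_1H^*$, $B=D_2D_1^{-2}H$, $C=D_2H^*$, $D=D_1^*H$. First I would verify condition~(ii), $AB=CD$: in type~I both products collapse to $\OO$, and in type~II both equal $D_2D_1^{-1}$. Either way the identity holds, and the same computation records $A^*C=D^*B=D_2D_1^{-1}$, so the transforms already have the shape dictated by \thmref{t:main} with $L=D_2D_1^{-1}$.

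Next I would check condition~(i), namely $H^j(AC^*)=0$ for all $j$. In both types $AC^*=D_1D_2^*$, so Riemann--Roch on the K3 surface gives $\chi(D_1D_2^*)=2+\tfrac12(d_1-d_2)^2=0$, using $d_1^2=d_2^2=-2$ and $d_1\cdot d_2=0$ from \lemref{l:d1d2}. Since that lemma also says that neither $d_1-d_2$ nor $d_2-d_1$ is effective, $H^0(AC^*)=0$ and, by Serre duality, $H^2(AC^*)=0$, whence $\chi=0$ forces $H^1(AC^*)=0$ as well. The non-effectivity of $d_2-d_1$ likewise gives $H^0(CA^*)=0$, which is exactly the Cayley--Bacharach input needed inside \thmref{t:suffic} to guarantee a locally-free, fibrewise non-trivial extension. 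With (i) and (ii) established, \thmref{t:suffic} produces the Fourier--Mukai transform and exhibits the restrictions $\{E_x\}$ as a complete strongly simple family of vector bundles parametrized by $X$. Restricting the defining sequence to $X\cross\{x\}$ gives $0\to A\to E_x\to C\I_x\to0$, and since $a^2+c^2=-2(\ell+2h)\cdot h=-8$ while $a+c=\ell$, a one-line Chern-class count yields $c_1(E_x)=\ell$ and $\ch_2(E_x)=-5$, i.e. Mukai vector $(2,\ell,-3)$; this is the asserted moduli space isomorphic to $X$.

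The explicit Chern character formulas then follow by substituting $a=d_1-h$, $c=d_2-h$ and the type-appropriate $b,d$ into the master formula of \propref{p:necconds}, simplifying with $h^2=2$, $h\cdot\ell=0$, $d_i^2=-2$, $d_1\cdot d_2=0$ and the degrees $d_1\cdot h=d_2\cdot h=2$ (type~I) or $d_1\cdot h=1,\ d_2\cdot h=3$ (type~II); this is routine but tedious. For the normalization I would apply $\R q_*$ to the defining extension. In type~I both $A=D_1H^*$ and $C=D_2H^*$ have $c_1$ of degree $0$ yet are non-trivial, so $\R\Gamma(A)=\R\Gamma(C)=0$; hence $\R q_*(A\boxtimes B)=0$, and the triangle for $(C\boxtimes D)\I_\Delta$ leaves only $\R q_*\bigl((C\boxtimes D)\OO_\Delta\bigr)=C\tensor D=\OO$ in degree $0$, which after the connecting shift gives $\Phi(\OO)=\OO[-1]$.

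The main obstacle is the type~II normalization, where the asymmetry of the degrees is decisive. Here $H^0(A)=0$ but, because $h-d_1=e$ is the effective $(-2)$-curve of the type~II remark, $H^2(A)\cong H^0(\OO(e))\neq0$; thus $\R\Gamma(A)$ is concentrated in degree $2$ and $\R q_*(A\boxtimes B)$ feeds $\Phi^2(\OO)$. Since $H^2(C)=0$ and the diagonal term sits in degree $0$, one checks $R^2q_*\E=H^2(A)\tensor B\neq0$, so $\Phi^2(\OO)\neq0$; comparing with the total $\ch(\Phi(\OO))=(-1,0,0)$ then forces $\Phi^1(\OO)\neq0$ as well, so $\OO$ genuinely straddles two cohomological degrees and fails $\Phi$-WIT, while the vanishing of $\ch_1$ still gives $\det\Phi(\OO)=\OO$. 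Pinning this down rigorously --- in particular controlling $\R\Gamma(C)$ for $C=D_2H^*$ and verifying that the degree-$2$ and lower-degree contributions cannot collapse into a single degree --- is where the real work lies.
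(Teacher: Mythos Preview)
Your overall strategy coincides with the paper's: invoke \thmref{t:suffic} after checking conditions (i) and (ii), read off the Chern characters from \propref{p:necconds}, and compute $\Phi(\OO)$ by applying $\R q_*$ to the defining extension. Your verification of $H^j(AC^*)=0$ via \lemref{l:d1d2} is in fact more explicit than the paper's, which simply asserts that the hypotheses of \thmref{t:suffic} are met. The type~I normalization is also handled the same way in both.

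The one substantive gap is exactly where you flag it: the type~II normalization. Your argument that $R^2q_*\E=H^2(A)\tensor B$ presupposes that the connecting map $R^1q_*\bigl((C\boxtimes D)\I_\Delta\bigr)\to R^2q_*(A\boxtimes B)$ vanishes, and your fallback via $\ch(\Phi(\OO))$ still needs $\Phi^0(\OO)=0$, which in turn requires knowing $H^0(D_2H^*)=0$. The paper settles this directly: it first computes that $\R\Gamma(D_1H^*)$ is one-dimensional in degree~$2$ (so $R^2q_*(A\boxtimes B)=D_2D_1^{-2}H$) and that $\R q_*\bigl((D_2H^*\boxtimes D_1^*H)\I_\Delta\bigr)$ is concentrated in degree~$1$, sitting in a short exact sequence $0\to D_2D_1^*\to R^1\to D_1^*H\to 0$. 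This already gives $\det\Phi(\OO)=\OO$ from the four-term exact sequence $0\to R^1\Phi(\OO)\to R^1\to D_2D_1^{-2}H\to R^2\Phi(\OO)\to 0$. To see that the middle map is zero, the paper argues that $D_2D_1^*\hookrightarrow R^1$ lifts to $R^1\Phi(\OO)$ (using $H^0(D_1^{-3}H)=0$, which holds since $\deg(h-3d_1)=-1$), and that the induced map $D_1^*H\to D_2D_1^{-2}H$ would come from a section of $D_2D_1^*$, which vanishes. Hence $R^1\Phi(\OO)=R^1$ has rank~$2$ and $R^2\Phi(\OO)=D_2D_1^{-2}H$ is nonzero, so $\OO$ fails $\Phi$-WIT. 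Filling in these cohomology computations for $D_1H^*$ and $D_2H^*$ (using the effective curve $e=h-d_1$ from the type~II remark) would complete your argument along the same lines.
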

\noindent Recall that a sheaf $F$ satisfies $\Phi$-WIT$_i$ if for all $j\neq i$,
$R^j\Phi(F)=0$ and satisfies $\Phi$-WIT if it satisfies $\Phi$-WIT$_i$ for any
$i$.
\begin{proof}
We obtain the existence of the Fourier-Mukai transforms in both cases by
observing that the values of $A$, $B$, $C$ and $D$ satisfy the sufficient
conditions of \thmref{t:suffic}.
The formulae for the Chern character of $\Phi(F)$
comes from the general formula in \propref{p:necconds}.

To compute $\Phi(\OO)$ we apply $\R q_*$ to
the extensions. For $\deg(d_1)=2$ we have $H^i(D_1H^*)=0=H^i(D_2H^*)$
for all $i$ and so $\R q_*((D_2H^*\boxtimes
D^*_2H)\I_\Delta)=\OO[-1]$ and $\R\hat\pi_*(D_1H^*\boxtimes D_1^*H)=0$.
This implies that $\Phi(\OO)=\OO[-1]$.

If $\deg(d_1)=1$ then $\R q_*(D_1H^*\boxtimes
D_2D_1^{-2}H)=D_2D_1^{-2}H[-2]$ and $\R q_*((D_2H^*\boxtimes
D^*_1H)\I_\Delta)$ is concentrated in position 1 and is an extension of
$D_2D_1^*$ by $D_1^*H$, which we shall denote by $R^1$. Then we have an exact
sequence
$$0\lra R^1\Phi(\OO)\lra R^1\lra D_2D_1^{-2}H\lra R^2\Phi(\OO)\lra0.$$
Hence, $\det\Phi(\OO)=\OO$. But, since $H^0(D_1^{-3}H)=0$ as $\deg(d_1)=1$ we
have that $D_2D_1^*\to R^1$ lifts to $R^1\Phi(\OO)$ and so there is a map
$D_1^*H\to D_2D_1^{-2}H$. But this comes from a section of $D_2D_1^*$ which
must be zero and so $\rk(R^1\Phi\OO)=2$ and hence $R^1\Phi(\OO)=R^1$ and
$R^2\Phi(\OO)=D_2D_1^{-2}H$. 
\end{proof}

\section{Applications to the biholomorphic classification of moduli spaces}

One of the main uses of Fourier-Mukai transforms is to give biholomorphic
isomorphisms between components of the moduli of simple sheaves. 
Recall from \cite{Muk1} that Fourier-Mukai transforms satisfy an analogue of
the Parseval theorem which states that if a
sheaf $F$ satisfies $\Phi$-WIT with transform $\hat F$ then
$\Ext^i(F,F)=\Ext^i(\hat F,\hat F)$ 
and so $\hat F$ is simple if and only if $F$ is simple.
Moreover, since the Zariski tangent space at $[F]$ to the moduli scheme of
simple sheaves is given by $\Ext^1(F,F)$ then if all the geometric points of a
component satisfy $\Phi$-WIT, $\Phi$ gives an isomorphism of components.
We have seen an example of this in the main theorem of \cite{BM}. The
following theorems can all be proved in the same way as the main theorem of
\cite{BM}. 
\begin{thm}
If $X$ is reflexive then, from \thmref{t:nondegFM} or \thmref{t:degFM},
$\Phi$ gives a map $\Hilb^nX$ to the moduli of simple sheaves with Mukai vector
$(1+2n,\pm n\ell,1-3n)$ given by $\I_W\to R^1\Phi\I_W$. By the above argument
this must be an isomorphism onto a component of the simple moduli scheme of
bundles.
\end{thm}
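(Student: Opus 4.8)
The plan is to verify the two inputs required by the Parseval argument recalled above---that every $\I_W$ with $W\in\Hilb^nX$ satisfies $\Phi$-WIT$_1$, and that $R^1\Phi(\I_W)$ carries the asserted Mukai vector---and then to upgrade this pointwise information to an isomorphism of moduli schemes by a tangent space and dimension count. I would work first in the non-degenerate and type I cases, where \thmref{t:nondegFM} and \thmref{t:degFM} supply $\Phi(\OO)=\OO[-1]$; the type II case will need a separate device, which I flag as the main obstacle at the end.

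First I would establish the WIT property. Since $\Phi(\OO_x)={}_xE$ is a rank $2$ bundle on $Y$ in degree $0$, the skyscraper $\OO_x$ is $\Phi$-WIT$_0$, and filtering $\OO_W$ by skyscrapers shows $\OO_W$ is $\Phi$-WIT$_0$ with $R^0\Phi(\OO_W)$ of rank $2n$. Applying $\Phi$ to the structure sequence
\[0\lra\I_W\lra\OO\lra\OO_W\lra0\]
and using $\Phi(\OO)=\OO[-1]$ (so $R^0\Phi(\OO)=R^2\Phi(\OO)=0$ and $R^1\Phi(\OO)=\OO$), the long exact sequence of higher direct images collapses to $R^0\Phi(\I_W)=R^2\Phi(\I_W)=0$ together with
\[0\lra R^0\Phi(\OO_W)\lra R^1\Phi(\I_W)\lra\OO\lra0.\]
Hence $\I_W$ is $\Phi$-WIT$_1$, and $R^1\Phi(\I_W)$ is an extension of $\OO$ by a rank $2n$ bundle, so it is locally free whenever $W$ is reduced.

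Next I would read off the Mukai vector. As $\I_W$ is $\Phi$-WIT$_1$ we have $\ch(R^1\Phi(\I_W))=-\ch(\Phi(\I_W))$, and feeding $\ch(\I_W)=(1,0,-n)$ into the formula of \thmref{t:nondegFM} (respectively \thmref{t:degFM}) gives $\ch(R^1\Phi(\I_W))=(1+2n,-n\hat\ell,-5n)$; adding $\ch_0$ to $\ch_2$ to pass to the Mukai vector yields $(1+2n,-n\hat\ell,1-3n)$, which under the isomorphism $Y\cong X$ is the claimed $(1+2n,\pm n\ell,1-3n)$. Since $\I_W$ is a rank one torsion-free sheaf it is simple, so the Parseval theorem recalled above shows each $R^1\Phi(\I_W)$ is simple and gives $\Ext^1(\I_W,\I_W)\cong\Ext^1(R^1\Phi(\I_W),R^1\Phi(\I_W))$. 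Thus $\I_W\mapsto R^1\Phi(\I_W)$ is a morphism $\Hilb^nX\to\M$ whose differential is an isomorphism everywhere and which is injective because $\hat\Phi$ recovers $\I_W$. Using $\ell^2=-12$ the Mukai self-pairing of $(1+2n,-n\ell,1-3n)$ is $2n-2$, so $\dim\M=2n=\dim\Hilb^nX$; an injective \'etale morphism of smooth varieties of equal dimension is an open immersion, and properness of $\Hilb^nX$ forces the image to be a connected component, giving the isomorphism onto a component of the simple moduli scheme of bundles.

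The step I expect to be genuinely delicate is the type II case. There \thmref{t:degFM} gives only $\det\Phi(\OO)=\OO$ and $\OO$ fails $\Phi$-WIT; running the same long exact sequence now yields $R^2\Phi(\I_W)\cong D_2D_1^{-2}H\neq0$ for every $W$, so $\I_W$ is never $\Phi$-WIT and the Parseval argument does not apply verbatim. The hard part will be to restore the WIT condition---for example by twisting $\I_W$ by a suitable line bundle, by composing with a spherical twist that absorbs the stray $R^2$ contribution, or by passing to the inverse transform---so as to realise $\Hilb^nX$ inside the correct moduli space; once a WIT representative is in hand, the tangent space and dimension count above carry over unchanged.
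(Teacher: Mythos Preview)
Your argument is precisely the paper's: apply $\Phi$ to the structure sequence of $\I_W$, use $\Phi(\OO)=\OO[-1]$ to obtain the extension $0\to R^0\Phi\OO_W\to R^1\Phi\I_W\to\OO\to0$, and then invoke the Parseval isomorphism recalled in the paragraph preceding the theorem. You supply considerably more detail than the paper's one-line proof (the explicit Mukai-vector check, the dimension count, and the \'etale/properness justification), but the strategy is identical.

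Your concern about the type II case is well taken and is not resolved by the paper's proof either: that proof presupposes $\Phi(\OO)=\OO[-1]$, which \thmref{t:degFM} explicitly says fails for type II, and indeed the type II Chern-character formula there gives $\ch_1$ proportional to $d_2-3d_1+2h$ rather than to $\pm\ell$. So the theorem is most naturally read as covering the non-degenerate and type I situations; the subsequent theorem on line bundles without cohomology, which the paper explicitly says ``also applies'' to the degenerate case via $m=d_1-d_2$, is what picks up type II.
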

To prove this we need only apply $\Phi$ to the structure sequence
of $\I_W$ to obtain $0\to R^0\Phi\OO_W\to R^1\Phi\I_W\to\OO\to0$. In fact the
image is a moduli scheme of Gieseker stable sheaves when $X$ is
non-degenerate. 
\begin{thm}
Consider a K3 surface with a line bundle $M$ such that
$H^*(M)=0$. Then there is a Fourier-Mukai transform which gives an isomorphism
between $\Hilb^n X$ and a component of the moduli scheme of simple sheaves
with Mukai vector $(2n-1,\pm nm,-n-1)$.
\end{thm}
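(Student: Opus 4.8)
The plan is to take for $\Phi$ the Fourier-Mukai transform produced by \thmref{t:Lnocohom} with $L=M$; this is available because $H^*(M)=0$, and by Serre duality $H^*(M^*)=0$ as well, which is what will account for the two signs in $\pm nm$. For this $\Phi$ we have $\Phi(\OO)=\OO$, and each restriction $E_y$ sits in an extension $0\to\OO\to E_y\to M\I_y\to0$, so that $\ch(E_y)=(2,m,-3)$ and $c_2(E_y)=1$. The family over $\Hilb^nX$ that I will transform is $\{\I_W\}$. Feeding $\ch(\I_W)=(1,0,-n)$ into the formula of \thmref{t:Lnocohom} gives $\ch(\Phi\I_W)=(1-2n,\,nm,\,3n)$; hence, once $\I_W$ is shown to satisfy $\Phi-$WIT$_1$, the sheaf $R^1\Phi\I_W$ has Chern character $(2n-1,-nm,-3n)$, i.e. Mukai vector $(2n-1,-nm,-n-1)$, exactly as required (using $M^*$ in place of $M$ yields the sign $+nm$).

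The heart of the argument is to verify that $\I_W$ satisfies $\Phi-$IT$_1$, i.e. $H^0(\I_W\otimes E_y)=0=H^2(\I_W\otimes E_y)$ for every $y$. For the top cohomology, Serre duality together with $\lhom(\I_W,\OO)=\OO$ gives $H^2(\I_W\otimes E_y)\cong H^0(E_y^*)^*$; dualizing the sequence for $E_y$ produces $0\to M^*\to E_y^*\to\I_y\to0$, and since $H^0(M^*)=0=H^0(\I_y)$ we get $H^0(E_y^*)=0$. For the bottom cohomology, the sequence for $E_y$ and $H^0(M)=0$ force $H^0(E_y)\cong H^0(\OO)=\C$, the generator being the sub-line-bundle inclusion $\OO\hookrightarrow E_y$, whose zero scheme is the reduced single point $y$ (because $E_y/\OO=M\I_y$ forces $c_2=1$). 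Consequently no nonzero section of $E_y$ can vanish along a subscheme of length at least $2$, so $H^0(\I_W\otimes E_y)=0$ for all $y$ whenever $n\geq2$; by base change $R^0\Phi\I_W=0$, and $R^1\Phi\I_W$ is locally free since the fibre dimension $h^1$ is then constant. The case $n=1$ is dealt with directly: applying $\Phi$ to $0\to\I_w\to\OO\to\OO_w\to0$ and using $\Phi\OO=\OO$ and $\Phi\OO_w={}_wE$, one reads off $R^1\Phi\I_w=M^*\I_w$ from $0\to\OO\to{}_wE\to M^*\I_w\to0$, so $\I_w$ is still $\Phi-$WIT$_1$ with transform of Mukai vector $(1,-m,-2)$.

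With the WIT$_1$ property in hand the construction finishes exactly as the main theorem of \cite{BM}. Mukai's Parseval theorem gives $\Ext^i(\I_W,\I_W)\cong\Ext^i(R^1\Phi\I_W,R^1\Phi\I_W)$; in degree $0$ this shows $R^1\Phi\I_W$ is simple (as $\I_W$ is), and in degree $1$ it shows the induced map of Zariski tangent spaces is an isomorphism. Since $\Phi-$WIT$_1$ is an open condition holding on all of $\Hilb^nX$, base change yields a flat family of simple sheaves and hence a morphism $\Hilb^nX\to\M(2n-1,\pm nm,-n-1)$. Because $\Phi$ is an equivalence this morphism is injective on points and \'etale, so it is an open immersion, and the connectedness of $\Hilb^nX$ makes it an isomorphism onto a connected component of the moduli scheme.

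The step I expect to be the main obstacle is the global vanishing $H^0(\I_W\otimes E_y)=0$ for every $y$ (rather than merely for generic $y$), since this is precisely what upgrades generic simplicity to an honest $\Phi-$IT$_1$ statement and hence to a morphism defined on all of $\Hilb^nX$ with locally free image. The geometric input that makes it work is the rigidity of the distinguished section of each $E_y$, namely that it cuts out a reduced single point; verifying this uniformly, and checking flatness of the transformed family so that the moduli map is a genuine morphism, is where the real work lies.
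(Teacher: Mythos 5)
Your proposal is correct, and it follows the paper's overall skeleton — the transform $\Phi$ of \thmref{t:Lnocohom} with $\Phi(\OO)=\OO$, the assignment $W\mapsto R^1\Phi\I_W$, the Mukai vector computation (which checks out: $\ch(\Phi\I_W)=(1-2n,\pm nm,3n)$ gives $v(R^1\Phi\I_W)=(2n-1,\mp nm,-n-1)$), and the Parseval/tangent-space argument of \cite{BM} at the end — but you establish the crucial WIT$_1$ property by a genuinely different route. The paper's argument is purely functorial and uniform in $n$: apply $\Phi$ to $0\to\I_W\to\OO\to\OO_W\to0$; since $\OO$ and $\OO_W$ are $\Phi$-WIT$_0$, one gets $0\to R^0\Phi\I_W\to\OO\to R^0\Phi\OO_W\to R^1\Phi\I_W\to0$ together with $R^2\Phi\I_W=0$, and the middle map is nonzero by faithfulness of $\Phi$; since $R^0\Phi\OO_W=q_*\bigl(\E|_{W\times Y}\bigr)$ is locally free, a nonzero map from $\OO$ into it is injective, forcing $R^0\Phi\I_W=0$. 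You instead prove the stronger fibrewise statement: for $n\geq2$, $H^0(E_y)=\C$ is spanned by the canonical section whose zero scheme is the single reduced point $y$, so no nonzero section vanishes on a length-$n$ subscheme, which yields $\Phi$-IT$_1$; you then handle $n=1$ separately by exactly the paper's structure-sequence argument. Your route is longer and requires the case split — necessarily so, since IT$_1$ genuinely fails for $n=1$, where $H^0(\I_w\otimes E_w)\neq0$ — but it buys more: for $n\geq2$ it shows $R^1\Phi\I_W$ is locally free, so the image component consists of vector bundles, and for $n=1$ it identifies the transform explicitly as $M^*\I_w$, consistent with the failure of local freeness. One remark: your final step (injective étale morphism from the proper connected $\Hilb^nX$ onto a component) is stated at the same level of detail as the paper's appeal to \cite{BM}; both implicitly use that the image is closed as well as open, which deserves care since moduli of simple sheaves need not be separated, but this is a shared gloss rather than a defect of your argument.
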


In this case, \thmref{t:Lnocohom} implies that we have a Fourier-Mukai
transform $\Phi$ with $\Phi(\OO)=\OO$.
Then if we apply $\Phi$ to the
structure sequence of $\I_W$ for $W\in\Hilb^n X$, we obtain the long exact
sequence 
$$0\to R^0\Phi\I_W\to\OO\to R^0\Phi\OO_W\to R^1\Phi\I_W\to0.$$
But the middle map cannot vanish as $\Phi$ is a natural isomorphism. This
means that $R^0\Phi\I_W=0$ and so, again, we obtain an isomorphism
between $\Hilb^n X$ and a component of the moduli scheme of sheaves this time
with Mukai vector $(2n-1,\pm nm,-n-1)$. This also applies to the case of a
reflexive K3 surface
where we take $m=l+2h$ when $X$ is non-degenerate, or $m=d_1-d_2$ when $X$ is
degenerate. 
\section*{Appendix}
We establish the following technical lemma on the existence
of reductions of $\ell+2h$ on a reflexive K3 surface.
\begin{lemma}\label{l:d1d2}
The divisor $\ell+2h$ is a sum of rational curves. Moreover, there are two
effective divisors $d_1$ and $d_2$ such that $\ell+2h=d_1+d_2$ with
$d_1\cdot d_2=0$, $d_1^2=-2=d_2^2$ and neither $d_1-d_2$ nor
$d_2-d_1$ are effective.
\end{lemma}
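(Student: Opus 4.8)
The plan is to work throughout with the class $D:=\ell+2h$. First I record the numerics: from $h^2=2$, $\ell^2=-12$, $h\cdot\ell=0$ one gets $D^2=-4$, $D\cdot h=4$ and $\chi(\OO_X(D))=0$, while the hypothesis that $X$ is degenerate, $H^0(LH^2)\neq0$, says precisely that $D$ is effective. The whole statement follows once I produce a class $v\in\mathrm{NS}(X)$ with
\[v^2=-2,\qquad v\cdot D=-2,\qquad 0<v\cdot h<4.\]
Indeed, set $d_1=v$ and $d_2=D-v$. Then $d_2^2=D^2-2\,v\cdot D+v^2=-2$ and $d_1\cdot d_2=v\cdot D-v^2=0$ automatically, so the required $d_1^2=d_2^2=-2$ and $d_1\cdot d_2=0$ hold. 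Effectivity comes for free from Riemann--Roch: a class $w$ with $w^2=-2$ has $\chi(w)=1$, so $w$ or $-w$ is effective, and the sign of $w\cdot h$ (with $h$ ample) selects which. Since $v\cdot h>0$ and $(D-v)\cdot h=4-v\cdot h>0$, both $v$ and $D-v$ are effective. Thus the existence of $v$ delivers the orthogonal decomposition.

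To produce $v$ I would use that $D$ is effective with $D^2=-4<0$, hence not nef: some irreducible curve meets $D$ negatively. On a K3 an irreducible curve $C$ satisfies $C^2\geq-2$ with equality iff $C\cong\PP^1$, and writing $D=mC+R$ with $R\geq0$ and $C\not\subseteq\mathrm{Supp}\,R$ shows that any component with $D\cdot C<0$ has $mC^2\le D\cdot C<0$, hence $C^2=-2$. A Hodge-index computation in the rank-$\leq3$ lattice $\langle h,\ell,[C]\rangle$, which must have signature $(1,\ast)$, pins down $C\cdot h\in\{1,2,3\}$ and $D\cdot C\in\{-1,-2,-3\}$. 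When $D\cdot C=-2$ I take $v=[C]$, landing in the type I ($C\cdot h=2$) or type II ($C\cdot h\in\{1,3\}$) case. When $D\cdot C=-1$ (or $-3$) I run a descending induction on the degree $D\cdot h$: passing to the effective class $D-C$ preserves self-intersection $-4$ and strictly lowers the degree, and one computes $C\cdot(D-C)=1$, so in the inductive decomposition $D-C=d_1'+d_2'$ the curve $C$ meets exactly one of the two orthogonal pieces (the intersection $1$ splits as $1+0$); attaching $C$ to that piece keeps both self-intersections equal to $-2$ and keeps them orthogonal. The base case is $D\cdot h=2$, where $D$ is forced to be a disjoint pair of $(-2)$-curves.

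The same induction simultaneously exhibits $D$ as a sum of smooth rational (i.e. $(-2)$) curves, which is the first assertion of the lemma. For the final clause I would fix the decomposition to be the one whose two pieces are as balanced as possible in $h$-degree. If $d_2-d_1$ were effective, say equal to $G\geq0$, then $G^2=(d_1-d_2)^2=d_1^2-2\,d_1\cdot d_2+d_2^2=-4$ and $G\cdot h=(d_2-d_1)\cdot h$. In the balanced type I case $G\cdot h=0$ forces $G=0$, hence $d_1=d_2$, contradicting $d_1\cdot d_2=0\neq-2=d_1^2$; in the type II case an effective difference lets one re-split $D$ into a balanced (type I) pair, so the balanced choice has neither $d_1-d_2$ nor $d_2-d_1$ effective.

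The hard part is the step hidden in the induction: showing that the descent never stalls, equivalently that no component of $D$ has non-negative self-intersection. The danger is already visible in $\langle h,\ell\rangle$, where the Pell equation $\alpha^2-6\beta^2=-1$ has no solution (squares are never $\equiv5\pmod6$); hence there are no $(-2)$-classes in $\langle h,\ell\rangle$, the curves $C$ above are genuinely new, and $\rho(X)\geq3$. Inside such a larger Néron--Severi group the numerics alone permit a nef class $P$ with $P^2=0$ and $P\cdot h\in\{1,2,3\}$, i.e. a small-degree elliptic fibration in which $D$ could contain an entire smooth, and therefore non-rational, fibre; this is exactly the situation in which peeling leaves behind an effective class of self-intersection $0$. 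Excluding it is the crux and cannot be read off from the intersection numbers of $h$ and $\ell$ alone: I would prove it by showing that the effective representative of $\ell+2h$ is rigid, so that it cannot contain a moving fibre (for instance by bounding $h^0(\ell+2h)$), or by ruling out such pencils directly, using $\ell^2=-12$ through the relation $\ell\cdot P=-2\,(P\cdot h)<0$ played against the nefness of $P$ rather than merely the rank-two data. This genus-exclusion is where I expect essentially all of the difficulty to lie.
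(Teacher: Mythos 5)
Your reduction---find a class $v$ with $v^2=-2$, $v\cdot(\ell+2h)=-2$ and $0<v\cdot h<4$, then get effectivity of $d_1=v$ and $d_2=(\ell+2h)-v$ from Riemann--Roch and ampleness of $h$---is sound as far as it goes, but the proposal stops exactly where the content of the lemma lies, and you say so yourself: the exclusion of components of non-negative self-intersection (equivalently, the guarantee that your descent never stalls) is left as a hoped-for step, with two strategies sketched and neither carried out. This is not a routine verification; it is what the paper's proof spends most of its length establishing. There, a component $c$ with $c^2>0$ is killed by Hodge index together with effectivity contradictions tailored to these numerics (for instance, $c^2=2$ and $\deg c=2$ force $(c-h)^2=0$ and $(c-h)\cdot h=0$, hence $c=h$ by Hodge index, whence $\ell+h$ would be effective, which is impossible since $(\ell+h)^2=-10$ while $\deg(\ell+h)=2$), and an elliptic component is killed by fibration arguments: a degree-one elliptic component is ruled out because $X$ can only fibre elliptically over $\PP^1$, and in the remaining degree-two case the paper either constructs $d_1,d_2$ directly or derives a contradiction from a locally trivial elliptic fibration with a section, which would have to be a product. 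Without a proof of this exclusion your argument is a reduction of the lemma, not a proof of it.

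Even granting that exclusion, the induction itself has two concrete defects. First, the parenthetical case $D'\cdot C=-3$ breaks your invariant: $(D'-C)^2=-4+6-2=0$, not $-4$, so the inductive hypothesis does not apply to $D'-C$; this case must be excluded separately, and your rank-three Hodge-index bound is computed only for the top class $\ell+2h$, so it does not automatically persist for the intermediate classes produced by peeling. Second, the lifting step asserts that $C\cdot(D'-C)=1$ must split as $1+0$ across the orthogonal pieces $d_1',d_2'$ of $D'-C$; this presumes $C\cdot d_i'\geq0$, which can fail when $C$ is itself a component of $d_1'$ or $d_2'$---a genuine possibility, since components of $\ell+2h$ may occur with multiplicity (the paper's cases with $c_1=c_2$ treat exactly such configurations). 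If, say, $C\cdot d_1'=-1$ and $C\cdot d_2'=2$, then attaching $C$ to either piece ruins the required numerics, and the obvious regroupings of $C$, $d_1'$, $d_2'$ fail as well, so the induction as stated does not go through. The paper sidesteps both problems by a finite, multiplicity-aware case analysis: since $\deg(\ell+2h)=4$, there are at most four irreducible components, and once all are known to be rational the decomposition into $d_1$ and $d_2$ is produced by inspection in each of the cases $n=2,3,4$.
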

Let $L$ and $H$ denote the line bundles $\OO(\ell)$ and $\OO(h)$,
respectively. The degrees of divisors will be taken with respect to $h$.
Note that $(\ell+2h)^2=-4$. So $\chi(LH^2)=0$. We also have $\deg(\ell+2h)=4$.
\begin{proof} 
Suppose that $\ell+2h$ is effective. Then, by the adjunction formula,
$\ell+2h$ cannot be irreducible. We can also see that $\ell+2h$ is not a
multiple of an irreducible divisor $d_1$ as $d_1=-2$ in this case. So
$\deg(d_1)\leq3$. Let $d_1$
be an irreducible component of $\ell+2h$. By the Hodge Index Theorem
we have $d_1^2\leq4$. But if
$d_1^2=4$ then $\deg(d_1)=3$ and so $d_2=\ell+2h-d_1$ is irreducible and
$d_1\cdot d_2>0$ so we cannot have $(d_1+d_2)^2=-4$. Now suppose that
$d_1^2=2$. Then if $\deg(d_1)=3$ we have $d_2$ irreducible again and this is
also a contradiction. So we have $\deg(d_1)=2$. But then $(d_1-h)^2=0$ and
$\deg(d_1-h)=0$ so the Hodge Index Theorem
implies that $d_1=h$. But then $\ell+h$ would be
effective and this is impossible because $(\ell+h)^2=-10$ whereas the degree is
2 so it would have to be the sum of two nodal curves with intersection $-3$; a
contradiction. 

The preceding argument has shown that the irreducible components of $\ell+2h$
are either elliptic curves or rational curves which we can assume are smooth
by Bertini. We shall show that the elliptic
curve case is also impossible. Suppose that $d_1$ is an elliptic curve.
Suppose that
$\deg(d_1)=1$. Then the linear system of $d_1$ expresses $X$ as an elliptic
fibration over $h$. This is not possible as $X$ can only be an elliptic
fibration over $\PP^1$. So we have $\deg(d_1)=2$. Let $d_2=\ell+2h-d_1$.
Since $d_1\cdot d_2\geq0$ we see that $d_2^2\leq-4$ and so $d_2$ cannot be
irreducible. Let $d_2=d_3+d_4$ with $d_3$ and $d_4$ effective. Since the
degrees of $d_3$ and $d_4$ must be 1, they must both be rational. Suppose now
that $d_3\neq d_4$. Then $d_3\cdot d_4\geq0$ and hence $d_1\cdot d_3=d_1\cdot
d_4=0$. Then we can use $d_3$ and $d_1+d_4$ since $d_3^2=-2$,
$(d_1+d_4)^2=-2$, $d_3\cdot(d_1+d_4)=0$ and neither $d_1+d_4-d_3$ nor its
negation are effective.
We are now left with the case where
$d_3=d_4$. Then, from $(d_1+2d_3)^2=-4$ we obtain $d_1\cdot d_3=1$. This
implies that $X$ is fibred over $d_3$. But this fibration must be
locally-trivial and since $d_1\subset X$ is a section, it must be 
trivial. This is not possible and so we have established the first part of the
lemma.

We now write
$$\ell+2h=\sum_{i=1}^nc_i,$$
where $n=2,3,4$ and $c_i$ are rational curves. If $n=2$ then we must have
$c_1\cdot c_2=0$ as
\begin{equation}
(\ell+2h)^2=-4.\label{e:squr}
\end{equation}
So we can set $d_1=c_1$ and $d_2=c_2$.
If $n=3$ then the condition (\ref{e:squr}) can be written
$$c_1\cdot c_2+c_2\cdot c_3+c_3\cdot c_1=1.$$
If $c_1=c_2$ then this reads $c_1\cdot c_3=3/2$, which is impossible. This
implies that the $c_i$ are distinct and so $c_i\cdot c_j\geq0$ for $i\neq j$.
Then (\ref{e:squr}) implies that only one of $c_i\cdot c_j$ is non-zero for
$i\neq j$. Without loss of generality, we suppose $c_1\cdot c_2=1$. Then set
$d_1=c_3$ and $d_2=c_1+c_2$. These have the desired properties.

Finally, suppose that $n=4$. The condition (\ref{e:squr}) reads
$$\sum_{i<j}c_i\cdot c_j=2.$$
If $c_1=c_2=c_3$ then we have $3c_1\cdot c_4=8$ which is a contradiction. 
Note that $(c_i+c_j)^2\leq-2$ for $i\neq j$ since, otherwise, the linear system
$|c_i+c_j|$ would be non-trivial and the resulting deformation would be a
non-rational irreducible curve contradicting the lemma. In particular, we must
have $c_i\cdot c_j\leq1$ for $i\neq j$. This shows that, if
$c_1=c_2$ then (\ref{e:squr}), which reads $2c_1\cdot c_2+2c_1\cdot
c_4+c_3\cdot c_4=4$, implies that $c_3\cdot c_4=0$ and $c_1\cdot
c_3=1=c_1\cdot c_4$. So we set $d_1=c_1+c_3$ and $d_2=c_1+c_4$. These
satisfy the conditions of the proposition.
On the other hand, if the $c_i$ are pairwise distinct the $0\leq c_i\cdot
c_j\leq1$ and we have two possibilities (up to permutation): either (a)
$c_1\cdot c_2=1=c_3\cdot c_4$ or (b) $c_1\cdot c_i=0$ for $i\neq1$. In case
(a), we set $d_1=c_1+c_2$ and $d_2=c_3+c_4$. In case (b), we set $d_1=c_1$ and
$d_2=c_2+c_3+c_4$.
\end{proof}

\bibliographystyle{plain}

\end{document}